\definecolor{darkred}{rgb}{1,0,0}
\newtheorem{theorem}{Theorem}[section]
\newtheorem{maintheorem}{Theorem}	
\newtheorem{lemma}[theorem]{Lemma}
\newtheorem{proposition}[theorem]{Proposition}
\theoremstyle{remark}
\theoremstyle{definition}
\newtheorem{remark}[theorem]{Remark}
\newtheorem{example}[theorem]{Example}
\newtheorem{definition}[theorem]{Definition}
\newtheorem*{claim*}{Claim}
\newtheorem*{defn*}{Definition}
\newcommand{\mci}{\mathcal{I}}
\newcommand{\lam}{\lambda}
\newcommand\preceqdot{\mathrel{\ooalign{$\prec$\cr
  \hidewidth\raise0.225ex\hbox{$\cdot\mkern0.5mu$}\cr}}}
\newcommand{\rmci}{\vec{\mci}}
\title{Webs and canonical bases in degree two}
\author{Chris Fraser}
\address{Michigan State University, Lansing, MI.}
\email{fraserc4@msu.edu}
\keywords{Grassmannians, webs, canonical basis, Catalan}
\numberwithin{equation}{section}
\begin{document}
\begin{abstract}We show that Lusztig's canonical basis for the degree two part of the Grassmannian coordinate ring is given by SL(k) web diagrams. Equivalently, we show that every SL(2) web immanant of a plabic graph for Gr(k,n) is an SL(k) web invariant. 
\end{abstract}

\maketitle

Let $L_n(\lambda)$ denote the irreducible polynomial representation of ${\rm GL}_n$ indexed by the weakly decreasing sequence $\lambda \in \mathbb{N}^n$. 
Its character is the Schur polynomial $s_\lambda(x_1,\dots,x_n)$ and its dimension is the cardinality of the set ${\rm SSYT}(\lambda,[n])$ of 
semistandard Young tableaux whose shape is~$\lambda$ and whose entries are drawn from $[n]:= \{1,\dots,n\}$. One is interested in constructing bases of $L_n(\lambda)$ with good symmetry and positivity properties and with basis elements indexed by ${\rm SSYT}(\lambda,[n])$ in a natural way, see \cite{Kamnitzer} for a recent survey. Lusztig and Kashiwara introduced the {\sl dual canonical basis} (henceforth the ``canonical'' basis) as one solution to this problem. An ongoing aim of combinatorial representation theory is to make this basis more explicit. 

Let $\omega_k$ denote the $k$th fundamental weight for ${\rm GL}_n$. In the special case that $\lambda = d \omega_k$ for a positive integer~$d$, one can study the representation $L_n(\lambda)$ and its canonical basis via the technology of 
${\rm SL}_k$ {\sl web diagrams}. These are planar diagrams drawn in an $n$-gon and subject to a ``$k$-valency condition.'' The number of boundary edges of such a diagram is always a multiple of~$k$. Such a web $W$ determines a {\sl web invariant} 
$$[W] \in L_n(d\omega_k)$$ 
when $W$ has $dk$ many boundary edges. Web invariants are known to span the representation~$L_n(d\omega_k)$. They are not linearly independent, but all linear relations between them are a consequence of known diagrammatical relations, the {\sl skein relations}~\cite{CKM}. A second proof of this fact using Postnikov's {\sl plabic graphs} appeared in \cite{FLL}.

The special case of the representations $L_n(d\omega_k)$ is of some interest because of the following connection with the Grassmannian ${\rm Gr}(k,n)$ of $k$-subspaces in $\mathbb{C}^n$. Let $\mathbb{C}[{\rm Gr}(k,n)]$ denote the homogeneous coordinate ring of ${\rm Gr}(k,n)$ in the Pl\"ucker embedding and let 
$\mathbb{C}[{\rm Gr}(k,n)]_{(d)}$ denote the subspace spanned by degree~$d$ monomials in Pl\"ucker coordinates. The action of ${\rm GL}_n$ on $\mathbb{C}^n$ induces an action on the Grassmannian coordinate ring and one has
$$ \mathbb{C}[{\rm Gr}(k,n)]_{(d)} \cong L_n(d\omega_k)$$
as ${\rm GL}_n$-representations. Thus the coordinate ring inherits a canonical basis degree by degree.

In this realization, the canonical basis when $d=1$ is identified with the set of Pl\"ucker coordinates on ${\rm Gr}(k,n)$. Our main result addresses the case $d=2$:
\begin{maintheorem}\label{thm:main1}
Each element of the dual canonical basis for $L_n(2\omega_k)$ is an ${\rm SL}_k$ web invariant, for any $k$ and $n$. 
\end{maintheorem}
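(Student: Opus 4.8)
The plan is to make the degree-two dual canonical basis completely explicit, reinterpret each of its elements as an $\SL_2$ web immanant, and then exhibit an explicit $\SL_k$ web computing the same function. Writing $p_I$ for the Pl\"ucker coordinate indexed by a $k$-subset $I \subseteq [n]$, I would first pin down the indexing and the leading terms. Since $2\omega_k$ is the $k\times 2$ rectangle, the dual canonical basis of $L_n(2\omega_k)$ is indexed by ${\rm SSYT}$ of that shape, i.e.\ by ordered pairs of $k$-subsets $(I,J)$ whose columns satisfy $I\le J$ entrywise, the associated standard monomial being $p_I p_J$. The dual canonical basis element $b_{I,J}$ is unitriangularly related to $p_I p_J$ with respect to a dominance order on such pairs, with correction terms governed by the quadratic Grassmann--Pl\"ucker relations. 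I would capture these corrections through the Temperley--Lieb/immanant picture (in the spirit of the Temperley--Lieb immanants of Rhoades and Skandera): to $(I,J)$ one attaches a non-crossing perfect matching $\tau=\tau(I,J)$, an $\SL_2$ web, so that $b_{I,J}$ equals the Temperley--Lieb immanant indexed by $\tau$, a signed sum of standard monomials $p_{I'}p_{J'}$ whose coefficients are read off from how $\tau$ pairs up the two column sets.

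Second, I would promote this combinatorial immanant to an honest $\SL_2$ web immanant of a plabic graph, using the plabic-graph technology of \cite{FLL}. The target is the reformulation announced in the abstract: for a suitable plabic graph $G$ for $\Gr(k,n)$ and the matching $\tau$, the web immanant $\Imm_\tau(G)$ is a well-defined element of $\mathbb{C}[\Gr(k,n)]_{(2)}$ that coincides with $b_{I,J}$. Establishing this identification amounts to checking that the flow/path expansion of $\Imm_\tau(G)$ reproduces exactly the signed standard-monomial expansion from the first step, which is a bookkeeping task once the matching $\tau$ and the plabic graph $G$ are matched up correctly.

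Third --- and this is the crux --- I would construct from $\tau$ an $\SL_k$ web $W$ with $2k$ boundary edges and prove $[W]=b_{I,J}$. The natural mechanism is $(\GL_2,\GL_k)$ skew Howe duality, which intertwines the $\SL_2$- and $\SL_k$-web categories and exchanges the two columns of the $k\times 2$ rectangle with its $2\times k$ transpose: the non-crossing matching $\tau$ should transform, via a ladder/growth construction as in \cite{CKM}, into a specific $\SL_k$ web $W$. To conclude, I would compare $[W]$ with $b_{I,J}$ in one of two ways. The more robust route invokes the characterization of the dual canonical basis as the unique basis that is bar-invariant and unitriangular with respect to standard monomials in dominance order; since (in the quantum web formalism) web invariants are bar-invariant, it then suffices to show that $[W]$ has leading standard monomial exactly $p_I p_J$. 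The more direct route computes $[W]$ from its tensor-contraction definition and matches it term-by-term with the immanant $\Imm_\tau(G)$; here one may use that web invariants span $L_n(2\omega_k)$ with only skein relations among them, reducing equality to agreement of leading terms plus a linear-independence argument.

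I expect the main obstacle to be precisely this final verification: controlling not merely the leading Pl\"ucker monomial of $[W]$ but all of its lower-order correction terms, and showing they agree with those of $b_{I,J}$. This is where the special features of degree two must be leveraged --- that $\tau$ is a single non-crossing matching and that the correction terms arise only from quadratic relations --- so that the skew-Howe translation produces a web $W$ simple enough for its invariant to be expanded explicitly and compared against the immanant. If the clean bar-invariance/leading-term characterization can be made to apply to $[W]$, the burden collapses to the single leading-term computation; otherwise the term-by-term comparison, while elementary in principle, is the step most likely to require genuine combinatorial work.
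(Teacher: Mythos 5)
Your first two steps --- indexing the degree-two basis by column pairs and identifying each basis element with an $\SL_2$ web immanant of a plabic graph --- coincide with the paper's reduction of Theorem~\ref{thm:main1} to Theorem~\ref{thm:main2}; that part is genuinely available from \cite{LamDemazure} and \cite{RhoadesSkandera}, and the paper simply cites it. The gap is in your third step, which is where all the content of the theorem lives. You never actually construct the $\SL_k$ web: ``skew Howe duality / a ladder-growth construction as in \cite{CKM}'' is a pointer, not a recipe. The paper's construction is specific and nontrivial: the noncrossing matching is broken into color sets (cyclic intervals delimited by short arcs), collapsed to a weighted dissection of an $s$-gon, completed to a weighted triangulation, and the web is a weighted dual graph of that triangulation; even then one must prove (Proposition~\ref{prop:flipinvariance}, via a square-switch skein relation) that the resulting invariant is independent of the choice of triangulation. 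Without a concrete candidate $W$ there is nothing to verify, and no amount of formal argument in your second and fourth steps can substitute for it.

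Moreover, your ``more robust'' verification route is flawed as stated. Bar-invariance together with the correct leading standard monomial does not characterize a dual canonical basis element: Lusztig's characterization also requires the lattice condition that every lower-order coefficient lie in $q\mathbb{Z}[q]$. This is exactly the failure mode behind \cite{KhovanovKuperberg}: $\SL_3$ web invariants can have the expected unitriangular leading behavior and still fail to be dual canonical, the discrepancy sitting entirely in the lower-order coefficients --- precisely the terms your shortcut declines to control. So for $k\geq 3$ your route collapses back to ``control all lower-order terms,'' which you correctly identify as the main obstacle but do not resolve; the crux of the theorem is left open. The paper avoids leading-term analysis altogether: since $\mathbb{C}[\Gr(k,2k)]^{\rm std}\cong S(2^k)$ as $\mathfrak{S}_{2k}$-representations, Schur's lemma gives a unique equivariant pairing with $\mathbb{C}[\Gr(2,2k)]^{\rm std}$ valued in the sign representation; by \cite{FLL} the immanant $F_M$ is dual under this pairing (up to the explicit sign of Lemma~\ref{lem:signs}) to the noncrossing monomial $\Delta(M)$; hence everything reduces to the purely combinatorial assertion (Theorem~\ref{thm:duality}) that $W(\mathfrak{t}_T)$ admits exactly one consistent edge-labeling whose boundary word is a noncrossing matching, namely that of $M(T)$, which is proved by induction on $k$ by deleting the dimer cover attached to a short arc. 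Some substitute for this pairing-and-counting mechanism is what your proposal lacks. Note also that the case $n>2k$ requires the separate semistandardization step of Section~\ref{secn:sszn} (Lemma~\ref{lem:stitchlemma}), which your uniform indexing by pairs $(I,J)$ silently skips.
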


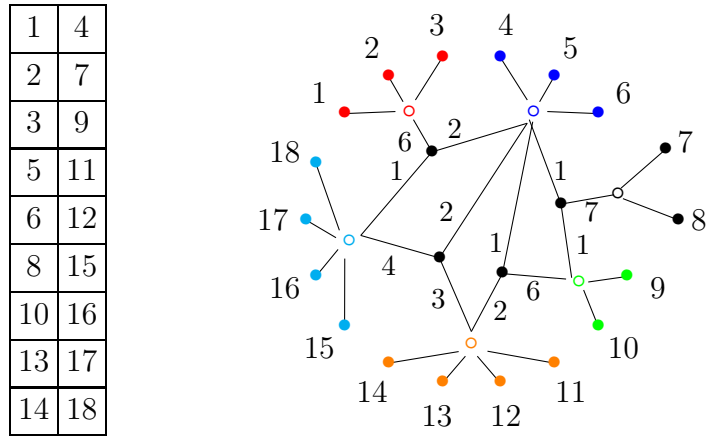
\begin{figure}\centering
\begin{tikzpicture}[scale = 1.1]
\node at (-5,0) {
\begin{ytableau}
1 & 4  \\
2 & 7  \\
3 & 9  \\
5 & 11  \\
6 & 12 \\
8 & 15  \\
10 & 16  \\
13 & 17  \\
14 & 18  \\
\end{ytableau}};

\draw (0:2.5cm)--(.4*360/18:1.85cm);
\draw (1*360/18:2.5cm)--(.6*360/18:1.85cm);
\node at (.5*360/18:1.8cm) {$\circ$};
\node at (0:2.5cm) {$\bullet$};
\node at (360/18:2.5cm) {$\bullet$};

\draw (16*360/18:2cm)--(16.4*360/18:1.6cm);
\draw (17*360/18:2cm)--(16.6*360/18:1.6cm);
\node at (16.5*360/18:1.5cm) {\textcolor{green}{$\circ$}};
\node at (16*360/18:2cm) {\textcolor{green}{$\bullet$}};
\node at (17*360/18:2cm) {\textcolor{green}{$\bullet$}};

\draw (2*360/18:2cm)--(-5+3*360/18:1.6cm);
\draw (4*360/18:2cm)--(5+3*360/18:1.6cm);
\draw (3*360/18:2cm)--(3*360/18:1.65cm);
\node at (3*360/18:1.5cm) {\textcolor{blue}{$\circ$}};
\node at (2*360/18:2cm) {\textcolor{blue}{$\bullet$}};
\node at (3*360/18:2cm) {\textcolor{blue}{$\bullet$}};
\node at (4*360/18:2cm) {\textcolor{blue}{$\bullet$}};

\draw (5*360/18:2cm)--(-5+6*360/18:1.6cm);
\draw (7*360/18:2cm)--(5+6*360/18:1.6cm);
\draw (6*360/18:2cm)--(6*360/18:1.65cm);
\node at (6*360/18:1.5cm) {\textcolor{red}{$\circ$}};
\node at (5*360/18:2cm) {\textcolor{red}{$\bullet$}};
\node at (6*360/18:2cm) {\textcolor{red}{$\bullet$}};
\node at (7*360/18:2cm) {\textcolor{red}{$\bullet$}};

\draw (8*360/18:2cm)--(-7.5+9.6*360/18:1.6cm);
\draw (9*360/18:2cm)--(-2.5+9.5*360/18:1.65cm);
\draw (10*360/18:2cm)--(2.5+9.5*360/18:1.65cm);
\draw (11*360/18:2cm)--(5+9.6*360/18:1.6cm);
\node at (9.5*360/18:1.5cm) {\textcolor{cyan}{$\circ$}};
\node at (8*360/18:2cm) {\textcolor{cyan}{$\bullet$}};
\node at (9*360/18:2cm) {\textcolor{cyan}{$\bullet$}};
\node at (10*360/18:2cm) {\textcolor{cyan}{$\bullet$}};
\node at (11*360/18:2cm) {\textcolor{cyan}{$\bullet$}};

\draw (12*360/18:2cm)--(-7.5+13.6*360/18:1.6cm);
\draw (13*360/18:2cm)--(-2.5+13.5*360/18:1.65cm);
\draw (14*360/18:2cm)--(2.5+13.5*360/18:1.65cm);
\draw (15*360/18:2cm)--(5+13.6*360/18:1.6cm);
\node at (13.5*360/18:1.5cm) {\textcolor{orange}{$\circ$}};
\node at (12*360/18:2cm) {\textcolor{orange}{$\bullet$}};
\node at (13*360/18:2cm) {\textcolor{orange}{$\bullet$}};
\node at (14*360/18:2cm) {\textcolor{orange}{$\bullet$}};
\node at (15*360/18:2cm) {\textcolor{orange}{$\bullet$}};

\node at (-.8,.95) {\small 6};
\node at (-.2,1.1) {\small 2};
\node at (-.3,.1) {\small 2};
\node at (.3,-.28) {\small 1};
\node at (1.08,.6) {\small 1};
\node at (1.35,-.3) {\small 1};
\node at (-1.0,-.55) {\small 4};
\node at (-.9,.60) {\small 1};
\node at (-.4,-.95) {\small 3};
\node at (.35,-1.1) {\small 2};
\node at (.75,-.85) {\small 6};
\node at (1.45,.1) {\small 7};

\node at (6*360/18:.95cm) {$\bullet$};
\draw (6*360/18:1.4cm)--(6*360/18:.95cm);
\draw (-2+9.5*360/18:1.35cm)--(6*360/18:.95cm);
\draw (3*360/18:1.35cm)--(6*360/18:.95cm);

\node at (11.5*360/18:.6cm) {$\bullet$};
\draw (-2+9.5*360/18:1.35cm)--(11.5*360/18:.6cm);
\draw (3*360/18:1.35cm)--(11.5*360/18:.6cm);
\draw (13.5*360/18:1.35cm)--(11.5*360/18:.6cm);

\node at (15*360/18:.75cm) {$\bullet$};
\draw (-2+3*360/18:1.4cm)--(15*360/18:.75cm);
\draw (13.5*360/18:1.35cm)--(15*360/18:.75cm);
\draw (-2+16.5*360/18:1.35cm)--(15*360/18:.75cm);

\node at (.5*360/18:1.1cm) {$\bullet$};
\draw (3*360/18:1.4cm)--(.5*360/18:1.1cm);
\draw (.5*360/18:1.75cm)--(.5*360/18:1.1cm);
\draw (16.5*360/18:1.4cm)--(.5*360/18:1.1cm);

\node at (360/18:2.75cm) {7};
\node at (2*360/18:2.4cm) {6};
\node at (3*360/18:2.4cm) {5};
\node at (4*360/18:2.4cm) {4};
\node at (5*360/18:2.4cm) {3};
\node at (6*360/18:2.4cm) {2};
\node at (7*360/18:2.4cm) {1};
\node at (8*360/18:2.4cm) {18};
\node at (9*360/18:2.4cm) {17};
\node at (10*360/18:2.4cm) {16};
\node at (11*360/18:2.4cm) {15};
\node at (12*360/18:2.4cm) {14};
\node at (13*360/18:2.4cm) {13};
\node at (14*360/18:2.4cm) {12};
\node at (15*360/18:2.4cm) {11};
\node at (16*360/18:2.4cm) {10};
\node at (17*360/18:2.4cm) {9};
\node at (0:2.75cm) {8};
\end{tikzpicture}
\caption{A rectangular semistandard Young tableau with 9 rows and 2 columns and its corresponding ${\rm SL}_9$ web diagram on 18 boundary vertices. The main construction of this paper formulates such a recipe for rectangular tableaux with $k$ rows and $2$ columns for any~$k$.}
\label{fig:mainexample}
\end{figure}

We prove our theorem constructively: we set up a multivalued {\sl tableau-to-web map} 
$${\rm SSYT}(2 \omega_k,[n]) \to \text{ ${\rm SL}_k$ web diagrams on $n$ vertices with $2k$ boundary edges}$$
which becomes well-defined once we pass from web diagrams to web invariants. See Figure~\ref{fig:mainexample} for an instance of this map in the case $k=9$ which serves as our running example. 

Our tableau-to-web map passes through several intermediate Catalan-style objects, see Figure~\ref{fig:schematic} for a schematic of these intermediate steps and Figure~\ref{fig:intermediateobjects} for an illustration of these intermediate objects. To produce a web diagram from a tableau using our recipe, one first constructs a dissection of a certain polygon and then extends this dissection to a triangulation. The latter step involves choices but we give two different proofs that the resulting web invariant is independent of such choice: a direct proof which interprets the flip move on triangulations as a skein relation between web diagrams (see Proposition~\ref{prop:flipinvariance}) and a conceptual proof which shows that our web diagrams satisfy a duality property which specifies their web invariants uniquely (see Theorem~\ref{thm:duality}).

The {\sl cluster algebra} structure on $\mathbb{C}[{\rm Gr}(k,n)]$ suggests that 
the notions of web invariant and canonical basis element should be intimately related: every {\sl cluster monomial} in $\mathbb{C}[{\rm Gr}(k,n)]$ is expected to be both a web invariant \cite{FPI} and a canonical basis element (see \cite{GLS,Qin} for closely related results). One knows, however, that there are canonical basis elements which are not web invariants and also that there are canonical basis elements which 
are not cluster monomials (the latter happens even in degree $d=2$).

Expressing a canonical basis element $b$ as a web invariant is a useful thing to do. Due to the Pl\"ucker relations, there are manifold ways of expressing $b$ as a polynomial in Pl\"ucker coordinates. One can remove this ambiguity by expressing $b$ as a $\mathbb{Z}$-linear combination of {\sl standard} monomials in Pl\"ucker coordinates, but this expression does not respect the cyclic symmetry of the $n$-gon and can be a bit tedious to work out. By contrast, one can ``see'' many properties of $b$ (e.g., its degree of cyclic symmetry, the {\sl descents} of its corresponding tableau, and so on) directly from a web diagram whose invariant is~$b$.

\begin{figure}[ht]\centering
\begin{tikzpicture}[scale = .9]
\node at (0,0) {SYTs};
\draw [-to] (0,-.3)--(0,-1.0);
\draw [-to] (6,-1.0)--(6,-.25);
\draw [dashed] (1.5,-1.25)--(4,-1.25);
\node at (4,-1.25) {\small $>$};
\draw [dashed] (1.5,0)--(4,0);
\node at (4,0) {\small $>$};
\node at (6,0) {web diagrams};
\draw (7.5,0)--(9.5,0);
\node at (9.3,0) {\small $>$};
\node at (9.5,0) {\small $>$};
\node at (11.5,0) {web invariants};
\node at (0,-1.25) {dissections};
\node at (6,-1.25) {triangulations};
\end{tikzpicture}
\caption{The tableau-to-web map sends a standard tableau~$T$ to a weighted dissection $\mathfrak{d}_T$ of a certain polygon. One makes a choice of triangulation $\mathfrak{t}_T$ extending $\mathfrak{d}_T$ which determines a web diagram $W(\mathfrak{t}_T)$. The vertical maps are reversible but the dashed horizontal maps are multivalued. Nonetheless the invariant $[W(\mathfrak{t}_T)]$ is well-defined.} \label{fig:schematic}
\end{figure}
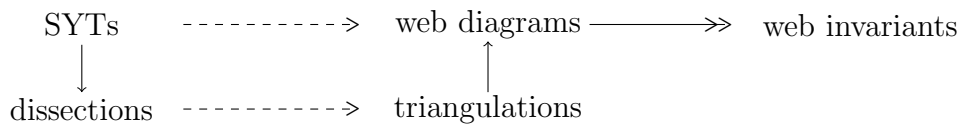

{\bf Previous work.} There has been much previous work comparing canonical bases and web invariants \cite{FPI,FrenkelKhovanov,KhovanovKuperberg,Kuperberg,Lamberti}. For fixed $d,k$ the complexity of the underlying combinatorics stabilizes once $n \geq dk$. We can analyze the problem by fixing either $k$ or $d$, letting the other parameter vary and assuming that $n$ is in the stable range.

When $k=2$, every canonical basis element is an ${\rm SL}_2$ web invariant. The underlying combinatorics is that of noncrossing matchings and the Temperley-Lieb algebra. Once $k>2$, web invariants are linearly dependent and it becomes an interesting problem to find a basis of web invariants with good properties. This problem has a nice solution when $k=3$ \cite{Kuperberg} but remains elusive when $k>3$. Our main theorem solves this problem for all~$k$ in the special case $d=2$.

The degree $d$ part of the Grassmannian coordinate ring when $d$ is either 2 or 3 has been actively studied, mostly through the lens of deciding which canonical basis elements are cluster variables \cite{BBE,BBEL,BBL,LeYildirim}. Our approach complements these papers by studying the whole basis (when $d=2$), not only the cluster variables.

{\bf Cyclic symmetry.} The canonical basis of $L_n(d\omega_k)$ was used to establish the cyclic sieving phenomenon for the action of promotion on ${\rm SSYT}(d\omega_k,[n])$ \cite{Rhoades}. The crucial property is that the canonical basis is a weight basis whose elements are permuted by the automorphism of $\mathbb{C}[{\rm Gr}(k,n)]$ induced by rotation of the $n$-gon. This action on basis elements implements promotion on tableaux. It remains an interesting open problem to find a basis of web invariants with the same properties. Again, this problem is solved when $k=2,3$ and now holds when $d=2$ using our Theorem~\ref{thm:main1}. We hope our construction might be a useful input towards solving the problem when $d=3$, and perhaps for all~$d$.

{\bf Web immanants.} The well-studied relationship between ${\rm Gr}(k,n)$ and {\sl plabic graphs}~\cite{Postnikov} yields a different perspective on the canonical basis for $L_n(2\omega_k)$. In this viewpoint, the canonical basis for $L_n(2\omega_k)$ coincides with the set of ${\rm SL}_2$ {\sl web immanants} \cite{LamDemazure,LamDimers}. 
The web immanant is constructed somewhat indirectly using the double dimer model 
on a plabic network $N$. Roughly speaking, web immanants are indexed by noncrossing matchings $m$, and the web immanant $F_m$ is defined as the generating function for double dimer covers of $N$ which match boundary vertices according to~$m$. It is far from obvious that the ``${\rm SL}_2$-style'' combinatorics of noncrossing matchings should have any relationship with the combinatorics of ${\rm SL}_k$ web diagrams. Nonetheless, since ${\rm SL}_2$ web immanants and degree-two canonical basis elements coincide, Theorem~\ref{thm:main1} can be reformulated as follows:
\begin{maintheorem}\label{thm:main2}
Every SL$_2$ web immanant for {\rm Gr}(k,n) is an SL$_k$ web invariant, for any $k,n$.
\end{maintheorem}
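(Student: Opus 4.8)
The plan is to deduce Theorem~\ref{thm:main2} from Theorem~\ref{thm:main1} and then prove the latter by an explicit construction. As recalled above, the work of Lam \cite{LamDemazure,LamDimers} identifies the set of ${\rm SL}_2$ web immanants for ${\rm Gr}(k,n)$ with the degree-two part of the dual canonical basis, i.e.\ with the canonical basis of $L_n(2\omega_k)$. Granting this identification, Theorem~\ref{thm:main2} becomes a restatement of Theorem~\ref{thm:main1}: an arbitrary ${\rm SL}_2$ web immanant \emph{is} a degree-two canonical basis element, and Theorem~\ref{thm:main1} asserts that every such element is an ${\rm SL}_k$ web invariant. So the entire content lies in exhibiting, for each canonical basis element, an ${\rm SL}_k$ web whose invariant it is.

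To build such a web I would follow the tableau-to-web map sketched in Figure~\ref{fig:schematic}. The canonical basis is indexed by ${\rm SSYT}(2^k,[n])$, and I would first reduce to the standard case: a semistandard tableau of rectangular shape $2^k$ is determined by its two strictly increasing columns and the weak row inequalities, and I expect a standardization step (valid in the stable range $n \geq 2k$) to reduce the general case to a standard tableau, with repeated entries encoded by merging boundary vertices at the end. Given a standard tableau $T$, the construction produces a weighted dissection $\mathfrak{d}_T$ of a polygon, extends it (non-uniquely) to a triangulation $\mathfrak{t}_T$, and reads off an ${\rm SL}_k$ web diagram $W(\mathfrak{t}_T)$ by replacing each weighted region with a local web gadget, as in Figure~\ref{fig:mainexample}.

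Next I would show that the invariant $[W(\mathfrak{t}_T)]$ is independent of the extension $\mathfrak{t}_T$. Any two triangulations refining a fixed dissection are connected by a sequence of flip moves, so it suffices to check that a single flip preserves the invariant; Proposition~\ref{prop:flipinvariance} supplies exactly this by realizing the flip as a skein relation among the associated web diagrams. (Alternatively, Theorem~\ref{thm:duality} provides a choice-free characterization of $[W(\mathfrak{t}_T)]$ through a duality property, which proves well-definedness at one stroke.) This renders the dashed horizontal arrows of Figure~\ref{fig:schematic} well-defined after passing to invariants, so $T \mapsto [W(\mathfrak{t}_T)]$ is an honest, single-valued map into ${\rm SL}_k$ web invariants.

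The final and hardest step is to show that $[W(\mathfrak{t}_T)]$ is \emph{precisely} the canonical basis element indexed by $T$, not merely some web invariant of the right weight. I would pin it down by the two defining properties of the dual canonical basis. First, the leading term: expanding $[W(\mathfrak{t}_T)]$ in the standard monomial basis of Pl\"ucker products, I expect $\mathfrak{d}_T$ to be engineered so that the unique maximal standard monomial is the one attached to $T$, occurring with coefficient one. Second, bar-invariance: the canonical basis is characterized as the bar-fixed basis that is unitriangular against standard monomials, and I would invoke the duality of Theorem~\ref{thm:duality} to certify that $[W(\mathfrak{t}_T)]$ is bar-invariant (equivalently, self-dual). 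Together these force coincidence with the canonical basis element, completing Theorem~\ref{thm:main1} and hence Theorem~\ref{thm:main2}. The principal obstacle I anticipate is controlling the standard monomial expansion sharply enough to isolate the leading term and to interface it with the duality argument; the intermediate Catalan-style objects exist precisely to make this bookkeeping tractable.
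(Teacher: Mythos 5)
Your architecture matches the paper's up through the construction stage: the equivalence of the two theorems via Lam's identification of ${\rm SL}_2$ web immanants with the degree-two canonical basis, the reduction to standard tableaux with a semistandardization step at the end, the dissection-to-triangulation-to-web pipeline, and flip-invariance via Proposition~\ref{prop:flipinvariance} are all exactly the paper's steps (the paper phrases the deduction in the direction B $\Rightarrow$ A rather than A $\Rightarrow$ B, but given Lam's identification the two are interchangeable). The gap is in your final step, the identification of $[W(\mathfrak{t}_T)]$ with the canonical basis element indexed by $T$. You propose to verify the two defining properties of the dual canonical basis directly: a unitriangular leading-term expansion in standard monomials, and bar-invariance, with the latter ``certified'' by Theorem~\ref{thm:duality}. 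Neither is developed, and the second is unfounded: Theorem~\ref{thm:duality} concerns the $\mathfrak{S}_{2k}$-equivariant pairing \eqref{eq:pairing} furnished by Schur's lemma (it counts consistent labelings of $W(\mathfrak{t}_T)$ against noncrossing matchings), and it says nothing about the bar involution, which lives in the quantized setting and is never touched by this machinery. Likewise nothing in the construction gives control over the standard monomial expansion of $[W(\mathfrak{t}_T)]$; the paper never computes it.

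The idea you are missing is that no intrinsic characterization of the canonical basis is needed, because in degree two the canonical basis has \emph{already} been identified as a dual basis with respect to the very pairing \eqref{eq:pairing}: by \cite[Theorem 2.1]{LamDemazure} the canonical basis of $L_{2k}(2\omega_k)^{\rm std}$ is the set of ${\rm SL}_2$ web immanants $F_M$, and by \cite[Equation (5.17), Theorem 8.1]{FLL} each $F_M$ is dual under \eqref{eq:pairing} to the ${\rm SL}_2$ web invariant $[W_M]$, which equals the noncrossing monomial $\Delta(M)$ up to the explicit sign of Lemma~\ref{lem:signs}. Since the pairing is perfect and unique up to homothety, a basis on one side determines its dual basis uniquely; hence Theorem~\ref{thm:duality} --- which says precisely that $[W(\mathfrak{t}_T)]$ pairs as a delta function against the noncrossing monomials --- immediately forces $[W(\mathfrak{t}_T)] = \pm F_{M(T)}$, with the sign pinned down by Lemma~\ref{lem:signs}. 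That single comparison replaces your entire final paragraph, and it is why the paper needs neither leading terms nor bar-invariance. (You also leave unaddressed the sign normalization and the fact that web immanants transform under semistandardization the same way web invariants do, which the paper handles in Lemma~\ref{lem:stitchlemma}; these are minor next to the main gap.)
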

This is how we will approach Theorem~\ref{thm:main1}. Theorem~\ref{thm:main2} was anticipated in \cite[Observation 8.2]{FLL}, where we observed that it held when $k=2,3,4,5$. 

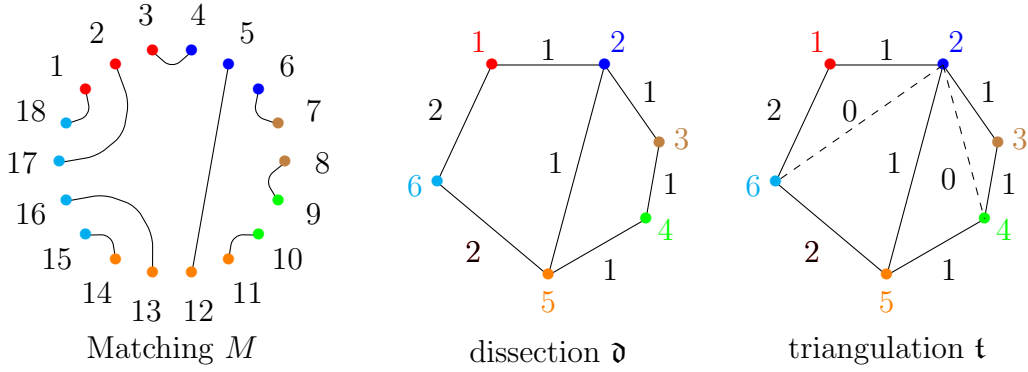
\begin{figure}\centering
\begin{tikzpicture}
\draw [rounded corners] (360/18:1.5cm)--(1.5*360/18:1.25cm)--(2*360/18:1.5cm);
\draw [rounded corners] (4*360/18:1.5cm)--(4.5*360/18:1.25cm)--(5*360/18:1.5cm);
\draw [rounded corners] (7*360/18:1.5cm)--(7.5*360/18:1.25cm)--(8*360/18:1.5cm);
\draw [rounded corners] (11*360/18:1.5cm)--(11.5*360/18:1.25cm)--(12*360/18:1.5cm);
\draw [rounded corners] (15*360/18:1.5cm)--(15.5*360/18:1.25cm)--(16*360/18:1.5cm);
\draw [rounded corners] (17*360/18:1.5cm)--(17.5*360/18:1.25cm)--(18*360/18:1.5cm);
\draw [rounded corners] (6*360/18:1.5cm)--(6.25*360/18:1.0cm)--(7.5*360/18:.75cm)--(8.75*360/18:1.0cm)--(9*360/18:1.5cm);
\draw [rounded corners] (10*360/18:1.5cm)--(10.25*360/18:1.0cm)--(11.5*360/18:.75cm)--(12.75*360/18:1.0cm)--(13*360/18:1.5cm);
\draw (3*360/18:1.5cm)--(14*360/18:1.5cm);
\node at (0:1.5cm) {\textcolor{brown}{$\bullet$}};
\node at (360/18:1.5cm) {\textcolor{brown}{$\bullet$}};
\node at (2*360/18:1.5cm) {\textcolor{blue}{$\bullet$}};
\node at (3*360/18:1.5cm) {\textcolor{blue}{$\bullet$}};
\node at (4*360/18:1.5cm) {\textcolor{blue}{$\bullet$}};
\node at (5*360/18:1.5cm) {\textcolor{red}{$\bullet$}};
\node at (6*360/18:1.5cm) {\textcolor{red}{$\bullet$}};
\node at (7*360/18:1.5cm) {\textcolor{red}{$\bullet$}};
\node at (8*360/18:1.5cm) {\textcolor{cyan}{$\bullet$}};
\node at (9*360/18:1.5cm) {\textcolor{cyan}{$\bullet$}};
\node at (10*360/18:1.5cm) {\textcolor{cyan}{$\bullet$}};
\node at (11*360/18:1.5cm) {\textcolor{cyan}{$\bullet$}};
\node at (12*360/18:1.5cm) {\textcolor{orange}{$\bullet$}};
\node at (13*360/18:1.5cm) {\textcolor{orange}{$\bullet$}};
\node at (14*360/18:1.5cm) {\textcolor{orange}{$\bullet$}};
\node at (15*360/18:1.5cm) {\textcolor{orange}{$\bullet$}};
\node at (16*360/18:1.5cm) {\textcolor{green}{$\bullet$}};
\node at (17*360/18:1.5cm) {\textcolor{green}{$\bullet$}};

\node at (360/18:2cm) {7};
\node at (2*360/18:2cm) {6};
\node at (3*360/18:2cm) {5};
\node at (4*360/18:2cm) {4};
\node at (5*360/18:2.0cm) {3};
\node at (6*360/18:2.0cm) {2};
\node at (7*360/18:2.0cm) {1};
\node at (8*360/18:2cm) {18};
\node at (9*360/18:2cm) {17};
\node at (10*360/18:2cm) {16};
\node at (11*360/18:2cm) {15};
\node at (12*360/18:2cm) {14};
\node at (13*360/18:2cm) {13};
\node at (14*360/18:2cm) {12};
\node at (15*360/18:2cm) {11};
\node at (16*360/18:2cm) {10};
\node at (17*360/18:2cm) {9};
\node at (0:2cm) {8};
\node at (0,-2.5) {Matching $M$};

\begin{scope}[xshift = 5cm]
\draw (3*360/18:1.5cm)--(6*360/18:1.5cm)--(9.5*360/18:1.5cm)--(13.5*360/18:1.5cm)--(16.5*360/18:1.5cm)--(.5*360/18:1.5cm)--(3*360/18:1.5cm);
\draw (3*360/18:1.5cm)--(13.5*360/18:1.5cm);
\node at (.5*360/18:1.5cm) {\textcolor{brown}{$\bullet$}};
\node at (.5*360/18:1.8cm) {\textcolor{brown}{3}};
\node at (3*360/18:1.5cm) {\textcolor{blue}{$\bullet$}};
\node at (3*360/18:1.85cm) {\textcolor{blue}{2}};
\node at (6*360/18:1.85cm) {\textcolor{red}{1}};
\node at (9.5*360/18:1.8cm) {\textcolor{cyan}{6}};
\node at (13.5*360/18:1.85cm) {\textcolor{orange}{5}};
\node at (16.5*360/18:1.8cm) {\textcolor{green}{4}};
\node at (6*360/18:1.5cm) {\textcolor{red}{$\bullet$}};
\node at (9.5*360/18:1.5cm) {\textcolor{cyan}{$\bullet$}};
\node at (11.5*360/18:1.55cm) {\textcolor{red}{2}};
\node at (13.5*360/18:1.5cm) {\textcolor{orange}{$\bullet$}};
\node at (16.5*360/18:1.5cm) {\textcolor{green}{$\bullet$}};
\node at (7.75*360/18:1.65cm) {2};
\node at (11.5*360/18:1.55cm) {2};
\node at (0,1.5) {1};
\node at (1.75*360/18:1.65cm) {1};
\node at (17.5*360/18:1.65cm) {1};
\node at (15.0*360/18:1.65cm) {1};
\node at (.1,0) {1};
\node at (0,-2.5) {dissection $\mathfrak{d}$};
\end{scope}

\begin{scope}[xshift = 9.5cm]
\draw (3*360/18:1.5cm)--(6*360/18:1.5cm)--(9.5*360/18:1.5cm)--(13.5*360/18:1.5cm)--(16.5*360/18:1.5cm)--(.5*360/18:1.5cm)--(3*360/18:1.5cm);
\draw (3*360/18:1.5cm)--(13.5*360/18:1.5cm);
\node at (.5*360/18:1.5cm) {\textcolor{brown}{$\bullet$}};
\node at (.5*360/18:1.8cm) {\textcolor{brown}{3}};
\node at (3*360/18:1.5cm) {\textcolor{blue}{$\bullet$}};
\node at (3*360/18:1.85cm) {\textcolor{blue}{2}};
\node at (6*360/18:1.85cm) {\textcolor{red}{1}};
\node at (9.5*360/18:1.8cm) {\textcolor{cyan}{6}};
\node at (13.5*360/18:1.85cm) {\textcolor{orange}{5}};
\node at (16.5*360/18:1.8cm) {\textcolor{green}{4}};
\node at (6*360/18:1.5cm) {\textcolor{red}{$\bullet$}};
\node at (9.5*360/18:1.5cm) {\textcolor{cyan}{$\bullet$}};
\node at (11.5*360/18:1.55cm) {\textcolor{red}{2}};
\node at (13.5*360/18:1.5cm) {\textcolor{orange}{$\bullet$}};
\node at (16.5*360/18:1.5cm) {\textcolor{green}{$\bullet$}};
\node at (7.75*360/18:1.65cm) {2};
\node at (11.5*360/18:1.55cm) {2};
\node at (0,1.5) {1};
\node at (1.75*360/18:1.65cm) {1};
\node at (17.5*360/18:1.65cm) {1};
\node at (15.0*360/18:1.65cm) {1};
\node at (.1,0) {1};
\draw [dashed] (3*360/18:1.5cm)--(9.5*360/18:1.5cm);
\node at (6.25*360/18:.85cm) {0};
\draw [dashed] (3*360/18:1.5cm)--(16.5*360/18:1.5cm);
\node at (-.75*360/18:.85cm) {0};
\node at (0,-2.5) {triangulation $\mathfrak{t}$};
\end{scope}
\end{tikzpicture}
\caption{Intermediate objects appearing in the tableau-to-web map. A tableaux $T$ affords a noncrossing matching~$M$ by a standard Catalan bijection. The boundary vertices of such an $M$ are broken into {\sl color sets} incdicated in the figure. Gluing vertices of the same color and keeping track of the number of edges between each color set, the matching $M$ begets a weighted dissection~$\mathfrak{d}$. The triangulation $\mathfrak{t}$ extends this dissection $\mathfrak{d}$ by adding two weight zero diagonals (drawn dashed). By a simple recipe, $\mathfrak{t}$ begets a web diagram, in this case the web from Figure~\ref{fig:mainexample}.}\label{fig:intermediateobjects}
\end{figure}

One also has a good notion of ${\rm SL}_3$ web immanants and these form a rotation-invariant basis for the degree three part of the Grassmannian coordinate ring. It would be very interesting to find an analogue of Theorem~\ref{thm:main2} in the $d=3$ case, see
{\sl loc. cit.} for a discussion when $k=2,3$.

{\bf Duality of symmetric group representations.} Canonical basis elements are indexed by semistandard tableaux. We prove our main result by first considering the weight subspace 
$$L_{dk}(d \omega_k)^{\rm std} \subset L_{n}(d \omega_k)$$
spanned by the canonical basis elements whose tableau is standard. The symmetric group~$\mathfrak{S}_n$ for $n=dk$ acts on $L_n(d\omega_k)$. The subspace $L_{dk}(d \omega_k)^{\rm std}$ appears as an $\mathfrak{S}_n$-irrep. Invoking Schur's lemma, one obtains a uniquely defined (up to homothety) perfect pairing 
\begin{equation*}
\langle, \rangle \colon L_{dk}(d \omega_k)^{\rm std} \otimes  L_{dk}(k \omega_d)^{\rm std} \to \varepsilon
\end{equation*}
where $\varepsilon$ is the sign representation. 

Thus, a choice of basis for $L_{dk}(k \omega_d)^{\rm std}$ induces a dual basis for $L_{dk}(d \omega_k)^{\rm std}$ with respect to $\langle,\rangle$. It seems likely that this notion of duality respects canonical basis vectors up to a predictable sign. We verified this expectation in \cite{FLL} when $d=2$ using ${\rm SL}_2$ web immanants. Thus, to prove Theorem~\ref{thm:main1} on the ``standard part'' of the representation $L_{2k}(2 \omega_k)$, we merely need to identify a set of web diagrams whose corresponding invariants are dual to the canonical ``noncrossing matching'' basis for $L_{2k}(k \omega_2)^{\rm std}$. This result is the heart of our paper, formulated as Theorem~\ref{thm:duality}. For instance, the ${\rm SL}_9$ web in Figure~\ref{fig:mainexample} is dual to the ${\rm SL}_2$ web given by the noncrossing matching $M$ in Figure~\ref{fig:intermediateobjects}. 

We extend the construction from $L_{2k}(2 \omega_k)^{\rm std}$ to the whole representation $L_{n}(2 \omega_k)$ using routine ``semistandardization'' constructions.

{\bf Organization.} The new results and constructions are in the first three sections. Section~\ref{secn:construction} formulates the map from standard tableaux to web diagrams in purely combinatorial terms and proves the invariance under change of triangulation using skein relations. Section~\ref{secn:positroids} makes a connection between the tableau-to-web map and plabic graphs. Each ${\rm SL}_k$ web diagram is in particular a plabic graph, hence encodes a positroid, and we identify the positroids which arise via the resulting tableau-to-positroid map. Section~\ref{secn:duality} states and proves our core combinatorial result, the web-matching duality Theorem~\ref{thm:duality}. Sections \ref{secn:invariants} and \ref{secn:immanants} give quick definitions of {\sl web invariants} and {\sl web immanants} respectively. Section \ref{secn:sszn} extends all constructions from standard to semistandard tableaux. We then deduce Theorem~\ref{thm:main2}, hence also Theorem~\ref{thm:main1}, from Theorem~\ref{thm:duality} by a quick argument.

We use standard combinatorial notation $\binom{[n]}k$ for the collection of $k$-subsets of $[n]$. We assume familiarity with the notions of standard and semistandard Young tableaux. 

\section{Tableau-to-web map: standard case}\label{secn:construction}
We formulate the combinatorics of our ``degree two'' tableau-to-web map and provide a running example. We follow the conventions in \cite{FLL} and refer to that paper for a more leisurely tour through the definitions.

\begin{definition}\label{defn:webdiagram}[Web diagram]
An ${\rm SL}_k$ {\sl web diagram} $W$ is a planar bipartite graph\\ $W = (V(W),E(W))$ embedded in a disk.  The vertex set $V(W)$ has $n$ {\sl boundary vertices}, each of which is on the boundary of the disk and is colored black in the bipartition. The boundary vertices are numbered $1,\dots,n$ in cyclic order. In addition, $V(W)$ has {\sl interior} vertices drawn in the interior of the disk and colored either white or black. Finally, $W$ comes with a {\sl multiplicity function} ${\rm mult} \colon E(W) \to [0,k]$ with the property that the multiplicity sum around every interior vertex $v$ equals $k$:
$$\sum_{e \sim v}{\rm mult}(e) = k .$$
We consider such $W$ up to isotopy of the disk fixing the boundary vertices. 
\end{definition}

Let us emphasize some of our web conventions which are not always taken in the literature. First, we do not require that every boundary vertex has degree at most one, or that every edge incident to a boundary vertex has multiplicity one. Second, we do not require that internal vertices are trivalent. And third, we get rid of the formalism of tags which is used in \cite{CKM} to fix certain signs, relying instead on results from \cite{FLL} and an extra result on signs (see Lemma~\ref{lem:signs}).

The {\sl content} of an ${\rm SL}_k$ web on $n$ boundary vertices is the degree sequence $(d_1,\dots,d_n)$ of its boundary vertices (with $d_i$ the degree of the $i$th boundary vertex). By elementary arguments, one knows always that $\sum_{i\in [n]}d_i = dk$ for some $d \in \mathbb{N}$ called the {\sl degree} of $W$.

We call a web {\sl type 1} if all of its boundary edges have multiplicity one (i.e., its multiweight is the all-ones vector in $\mathbb{Z}^n$). We call a web {\sl type 2} if all of its boundary vertices have degree at most one. Type 1 webs naturally give rise to functions on Grassmannians, while Type 2 webs are natural from the viewpoint of matchings and immanants.

Note that type 1 webs may have boundary vertices of high degree (although in this paper the degree will never exceed two), and type 2 webs may have boundary edges of high multiplicity (although in this paper the multiplicity will not exceed two). Only these two subclasses of webs will appear in this paper. 

For a type 2 web $W$, the {\sl multiweight} is the sequence $(\nu_1,\dots,\nu_n)$ recording the value of the $W$'s multiplicity function on the $i$th boundary edge of $W$ (i.e., the edge which is connected to boundary vertex $i$ if such exists). If boundary vertex $i$ has degree $i$ we take $\nu_i=0$. 

\begin{remark}
Both the content and the multiweight sequences play a role in what follows, particular when we begin to consider semistandard rather than just standard tableaux. As we spell out below, web duality pairs degree two ${\rm SL}_k$ webs of type 1 with ${\rm SL}_2$ webs. Under this duality, the content of the ${\rm SL}_k$ web is the multiweight of the corresponding ${\rm SL}_2$ web in a certain sense. More precisely, in the language of our Main Theorem~\ref{thm:main2}, one knows that each such ${\rm SL}_k$ web is a web immanant enumerating 2-like subgraphs, and the content of the ${\rm SL}_k$ web corresponds to the multiweight of these 2-like subgraphs.
\end{remark}

We call a web {\sl standard} if it has content $(1,1,\dots,1)$ and all its boundary edges have multiplicity one. Such a web is both type 1 and type 2. Any type 1 web has an associated standard web obtained by replacing each boundary vertex of degree $m$ by $m$ many boundary vertices of degree one (deleting boundary vertices of degree zero). Most arguments involving web diagrams can first be carried out for standard webs and then transferred to arbitrary webs by reversing this process.

\begin{example}
The right diagram in Figure~\ref{fig:mainexample} is a degree two standard ${\rm SL}_9$ web. The black numbers on the outside of the diagram label are the boundary vertex labels and those on the inside indicate edge multiplicities. Edge multiplicities of boundary edges equal one and are omitted. 
\end{example}

By a known recipe which we defer to Definition~\ref{defn:bracketofW}, any type 1 ${\rm SL}_k$ web $W$ of degree $d$ with $n$ many boundary vertices determines an element 
\begin{equation}\label{eq:invariantofW}
[W] \in L_n(d\omega_k) \cong \mathbb{C}[{\rm Gr}(k,n)]_{(d)}.
\end{equation}
To understand the constructions in the current section, the reader need only know that there are certain diagrammatic moves on web diagrams which induce linear relations between web invariants \cite{CKM,FLL}. In our set of conventions, these moves are contraction of bivalent vertices, bigon removal, dipole removal, and the square switch move, see 
\cite[Section 6]{FLL} for a discussion. 

\begin{example}\label{eg:clawgraph}
The simplest example of an ${\rm SL}_k$ web is a claw graph with one interior white vertex joined to $k$ distinct boundary vertices each by an edge of multiplicity one. Letting $I$ denote the subset of boundary vertices which are used in this claw graph, 
the corresponding web invariant is the Pl\"ucker coordinate $\Delta_I \in \mathbb{C}[{\rm Gr}(k,n)]$. 
\end{example}

We associate a web diagram to a tableau via three steps, indicated schematically in Figure~\ref{fig:schematic}. We now describe each of these these steps while reviewing the intermediate combinatorial gadgets which appear along the way.

\subsection{From SYT's to dissections}
We construct a map $T \mapsto \mathfrak{d}_T$ from the set ${\rm SYT}(2\omega_k)$ of standard Young tableaux with $k$ rows and $2$ columns to the set of weighted dissections of polygons of weight~$k$, i.e. 
we formulate the first downwards arrow in Figure~\ref{fig:schematic}. 

We assume familiarity with the concept of noncrossing matching of a $2k$-gon. Such matchings are equinumerous with ${\rm SYT}(2\omega_k)$ and these two sets are related by a well-known {\sl Catalan bijection} described as follows. If $ij$ is an an arc in a noncrossing matching $M$ and if $i <j$ then we say $i$ is the {\sl left endpoint} of $ij$ and $j$ is the {\sl right endpoint}. Given $T \in {\rm SYT}(2\omega_k)$, there exists a unique noncrossing matching $M(T)$ of the $2k$-gon with the property that the left endpoints of $M(T)$ (resp. right endpoints) are exactly the entries of the first (resp. second) column of~$T$. 

\begin{example}
The noncrossing matching $M$ of the 18-gon appearing in Figure~\ref{fig:intermediateobjects} is related to the tableau $T \in {\rm SYT}(2\omega_9)$ from Figure~\ref{fig:mainexample} by the Catalan bijection.  
\end{example}

\begin{definition}[Short arcs]\label{defn:shortarcs}
A {\sl short arc} in a noncrossing matching $M$ is an arc joining adjacent boundary vertices of the $2k$-gon. We let $s(M)$ denote the number of short arcs in~$M$. \end{definition}
Observe that every noncrossing matching has at least two short arcs (provided $k \geq 2$). 

For a noncrossing matching $M$ we can always define uniquely numbers $i_1 < \cdots < i_s$ with the property that $i_j$ and $i_j+1 \mod 2k$ are joined by an arc in $M$ and these are a complete list of the short arcs in $M$. (These typically coincide with the set of left endpoints of the short arcs of $M$ except in the special case that $1$ and $2k$ are joined by a short arc in~$M$.)

\begin{definition}[Color sets of $M$]\label{defn:intervals}
With $i_1,\dots,i_s$ as just defined, the {\sl color sets} of a noncrossing matching $M$ are the cyclic intervals $C_j := (i_{j-1},i_{j}]$.
\end{definition}

The color sets solve the following coloring problem: color, with as few colors as possible, the boundary vertices of the $2k$-gon so that no arc joins two vertices of the same color and all of the color sets are cyclic intervals. 

Observe that we have indexed things in such a way that $1 \in C_1$ always, since $i_s < 1 \leq i_1$ in cyclic order. 

\begin{example}
The matching $M$ from Figure~\ref{fig:intermediateobjects} has six short arcs with left endpoints $1$, $3$, $6$, $8$, $10$, and $14$. On the other hand, the numbers $i_1,\dots,i_6$ as in Definition~\ref{defn:intervals} are 3, 6, 8, 10, 14, and 18. The color sets $C_1, \dots, C_6$ are the intervals $[1,3]$, $[4,6]$, $[7,8]$, $[9,10]$, $[11,14]$, and $[15,18]$, indicated by the colors in the figure.  
\end{example}

\begin{remark}
Directly from the tableau, one can see that entries $i$ and $i+1$ will be in the same color set of $M(T)$ whenever $i$ appears in a strictly higher row of $T$ than $i+1$ does. When this happens, $i$ is often called a {\sl descent} of~$T$. One also needs to decide under what circumstances the pair $\{1,2k\}$ form a descent. Let $T \setminus \{1,2k\}$ denote the tableau obtained by deleting entries 1 and 2k and sliding the entries in the first column up one box. Then $\{1,2k\}$ form a descent if and only if $T \setminus \{1,2k\}$ is not a standard tableau on the ground set $[2,2k-1]$, i.e. if and only if $T \setminus \{1,2k\}$ has a decreasing row. 
\end{remark}

\begin{definition}[Weighted dissections]
A {\sl dissection} of an $s$-gon is a choice of diagonals in the $s$-gon which are mutually noncrossing. (For our purposes, the sides of the $s$-gon are also considered diagonals.) A {\sl weighted dissection} $\mathfrak{d}$ is a dissection together with an $\mathbb{N}$-weighting of its diagonals. The {\sl weight} of $\mathfrak{d}$ is the sum of the weights of its various diagonals. The {\sl content} 
$d_i$ of the vertex~$i$ in $\mathfrak{d}$ is the sum of the weights of its incident diagonals.  
\end{definition}

One has always 
\begin{equation}\label{eq:dissectiondegreesum}
\sum_{i \in [s]}d_i = 2{\rm wt}(\mathfrak{d})
\end{equation}
since each diagonal is counted twice in the above sum.  

A {\sl triangulation} is a dissection which is maximal by inclusion. 


{\bf Construction.} Given a noncrossing matching $M = M(T)$ with $s$ many short arcs and with color sets $C_1,\dots,C_s$, we obtain a weighted dissection $\mathfrak{d}_T$ of the $s$-gon by merging the boundary vertices in each color set (merging color set $C_i$ into vertex $i$ of the $s$-gon), replacing multiple edges by a single edge of the same weight. 

\medskip
This construction is illustrated in the passage from $M$ to $\mathfrak{d}$ in Figure~\ref{fig:intermediateobjects}.

Observe that the total weight of $\mathfrak{d}_T$ is the number of arcs in $M$, which is $k$. Similarly, the content $d_i$ of $\mathfrak{d}_T$ equals the cardinality of the color set $C_i$ of $M$.

\begin{remark}\label{rmk:reversibledissection}
The mapping $M(T) \mapsto \mathfrak{d}_T$ is not reversible. For example, rotating the matching~$M$ in Figure~\ref{fig:intermediateobjects} counterclockwise by one or two units would not affect the corresponding $\mathfrak{d}_T$.

More generally, the size of the inverse image of a given weighted dissection $\mathfrak{d}_T$ is given by the content $d_1$ of its first vertex. Indeed, given $\mathfrak{d}_T$, we can infer that the color set $C_1$ of $M(T)$ must be one of the cyclic intervals $[2k-d_1+1,1],\dots,[1,d_1]$. Once such a $C_1$ is chosen, we can reconstruct the color sets $C_2,\dots,C_s$, and moreover the matching $M(T)$, from the data of $\mathfrak{d}_T$. 

We can say for brevity that the mapping is reversible ``up to rotations.''
\end{remark}

\subsection{From dissections to triangulations}
We now discuss the passage from weighted dissections of weight $k$ to weighted triangulations of weight $k$, i.e. the bottom dashed arrow in Figure~\ref{fig:schematic}. 

Given a weighted dissection $\mathfrak{d}$ of the $s$-gon, we say that a weighted triangulation $\mathfrak{t}$ {\sl extends} $\mathfrak{d}$ if the underlying triangulation for $\mathfrak{t}$ extends the underlying dissection for $\mathfrak{d}$ and if moreover, the weight function for $\mathfrak{t}$ agrees with that for $\mathfrak{d}$ on the diagonals they share and assigns zero weight to the diagonals not present in $\mathfrak{d}$. 

For a tableau $T \in {\rm SYT}(2\omega_k)$, we will use the notation $\mathfrak{t}_T$ to indicate {\sl any} choice of weighted triangulation which extends the weighted dissection $\mathfrak{d}_T$ defined in the previous section. Observe that if $\mathfrak{t}$ and $\mathfrak{t}'$ are two such extensions, then their underlying triangulations are related by a sequence of flip moves at diagonals not present in $\mathfrak{d}$.

\begin{example}
Continuing our running example, there are four choices of triangulations $\mathfrak{t}$ extending the dissection $\mathfrak{d}$ from Figure~\ref{fig:intermediateobjects} because we can flip either of the dashed diagonals. 
\end{example}

\subsection{From triangulations to web diagrams}
We now construct a map $\mathfrak{t} \mapsto W(\mathfrak{t})$
from weight~$k$ weighted triangulations of an $s$-gon to standard ${\rm SL}_k$ webs of degree 2. That is, we formulate the upwards arrow in Figure~\ref{fig:schematic}. To be more precise, because of the rotational ambiguity discussed in Remark~\ref{rmk:reversibledissection}, the input to our map is a pair $(\mathfrak{t}, C_1)$ where $C_1 \in \binom{[2k]}{d_1}$ is a set whose cardinality matches the content $d_1$ of the first vertex of $\mathfrak{t}$ and which satisfies $1 \in C_1$. 

\begin{remark}\label{rmk:intended} In our intended application, the weighted triangulation $\mathfrak{t}$ is an extension of a weighted dissection $\mathfrak{d}_T$ corresponding to a tableau~$T$, and the set $C_1$ is the corresponding color set of the matching $M(T)$. \end{remark}

{\bf Construction.} Let $(d_1,\dots,d_s)$ be the content of $\mathfrak{t}$ and $C_1$ be as just described. For $i=2,\dots,s$, define inductively a cyclic interval $C_i \subset [2k]$ which contains the next $d_i$ many vertices immediately following the interval $C_{i-1}$. Thus $C_1 \coprod \cdots \coprod C_s$ is a decomposition of $[2k]$ into cyclic intervals, using \eqref{eq:dissectiondegreesum}.

We first describe $W(\mathfrak{t})$ as a graph and then address its edge multiplicities. The graph $W(\mathfrak{t})$ has 2k many boundary vertices, has $s$ many interior white vertices $w_1,\dots,w_s$ and has interior trivalent black vertices $b(t)$ indexed by the triangles $t$ in $\mathfrak{t}$. If $t$ has 
has boundary vertices $a,b,c$ in~$\mathfrak{t}$, then the trivalent vertex $b(t)$ is joined to the white vertices $w_a,w_b,w_c$ by edges. Finally, each interior white vertex $w_i$ is incident to the $d_i$ many boundary vertices in the set $C_i$ by a single edge of multiplicity one. This completes the description of $W(\mathfrak{t})$ as a graph. 

The multiplicities of the edges $b(t)w_a$, $b(t)w_b$, and $b(t)w_c$ around a trivalent vertex are specified as follows. Note that $\mathfrak{t} \setminus {\rm int}(t)$ is a union of three weighted triangulations, one for each side of $t$. We let $A$ denote the triangulation which contains the side $w_bw_c$, i.e. the triangulation which is ``opposite'' the edge $b(t)w_a$.
Then we set 
$${\rm mult}_{W(\mathfrak{t})}(b(t)w_a)  = \sum_{e \in \text{ polygon $A$ of $\mathfrak{t}$}}{\rm wt}_T(e),$$
with the sum over all edges $e$ inside the polygon $A$. The multiplicities of the edges $b(t)w_b$ and $b(t)w_c$ are defined analogously using the triangles $B,C$ opposite the sides $b(t)w_b$ and $b(t)w_c$ respectively. This completes the definition of $W(\mathfrak{t})$.

\medskip

Let us check that this above construction indeed produces an ${\rm SL}_k$ web of degree two. Since each diagonal $e \in \mathfrak{t}$ appears in exactly one of the triangulations $A,B,C$, the multiplicity sum around each $b(t)$ equals~$k$ as required. We must also argue that the multiplicity sum around every white vertex equals~$k$. Indeed, summing the multiplicities of the edges from $w_i$ to interior black vertices counts the weight of all of the diagonals of $\mathfrak{t}$ except those which are incident to boundary vertex $i$, i.e. this sum equals $k-d_i$. But we have added exactly $d_i$ additional edges of multiplicity one from $w_i$ to boundary vertices so that the total multiplicity sum equals $k$ indeed.  
 
{\bf Summary.} The tableau-to-web map is the map 
\begin{equation}\label{eq:isitdfd}
T \mapsto W(\mathfrak{t}_T) 
\end{equation}
where $\mathfrak{t}_T$ is {\sl any} choice of triangulation extending the dissection $\mathfrak{d}_T$. Recall that the construction of the web $W(\mathfrak{t})$ from a weighted triangulation $\mathfrak{t}$ requires a choice of set $C_1$. As alluded to above, in \eqref{eq:isitdfd} we are implicitly choosing this $C_1$ to be the color set $C_1$ of the matching $M(T)$.

Our next lemma says that the web invariant $[W(\mathfrak{t}) ]$ is independent of the choice of triangulation $\mathfrak{t}_T$ extending the dissection $\mathfrak{d}_T$.

\begin{proposition}\label{prop:flipinvariance}
Let $\mathfrak{d}$ be a weighted dissection of weight $k$ and let $\mathfrak{t},\mathfrak{t}'$ be two extensions of $\mathfrak{d}$ to weighted triangulations. 
Then the corresponding ${\rm SL}_k$ web invariants coincide:
$$[ W(\mathfrak{t}) ] = [ W(\mathfrak{t}') ] \in L_{2k}(2\omega_k) \cong \mathbb{C}[{\rm Gr}(k,2k)]_{(2)}.$$
\end{proposition}

\begin{proof}
It suffices to show the claim when $\mathfrak{t}$ and $\mathfrak{t}'$
differ by a flip in a quadrilateral since triangulations which extend~$\mathfrak{d}$ are connected by such moves. The diagonal of such quadrilateral is not in~$\mathfrak{d}$ and therefore has zero weight in both $\mathfrak{t},\mathfrak{t}'$. The local picture around the quadrilateral is illustrated in Figure~\ref{fig:skein} where $\alpha$, $\beta$, $\gamma$, $\delta$ are the weights of the four regions outside the quadrilateral. See Figure~\ref{fig:examplesquaremove} for an application of this local picture in our running example.  

The equality $[W(\mathfrak{t})] = [W(\mathfrak{t}')]$ then is an application of the ``square switch'' skein relation [FLL (eq 6.2)] with parameters $r:= k$, $j:= \alpha + \beta$,$s:= \alpha$, $v := \gamma$, $\ell := \beta + \gamma$. Note that $j-\ell+v-s = 0$, so the only term on the right hand side of \cite[Equation (6.2)]{FLL} is the case $t=0$ with coefficient $\binom 0 0 =1$.
\end{proof}

\begin{figure}\centering
\begin{tikzpicture}[scale = .8]
\begin{scope}[xshift = -5cm]
\node at (-1.5,0) {$\bullet$};
\node at (0,1.5) {$\bullet$};
\node at (0,-1.5) {$\bullet$};
\node at (1.5,0) {$\bullet$};
\draw [](-1.5,0)--(0,1.5)--(1.5,0)--(0,-1.5)--(-1.5,0);
\draw [dashed] (0,-1.5)--(0,1.5);
\draw (-1.5,0) .. controls (-1.75,2.7) and (-.5,2.3) ..  (0,1.5);
\draw (-1.5,0) .. controls (-1.75,-2.7) and (-.5,-2.3) ..  (0,-1.5);
\draw (1.5,0) .. controls (1.75,2.7) and (.5,2.3) ..  (0,1.5);
\draw (1.5,0) .. controls (1.75,-2.7) and (.5,-2.3) ..  (0,-1.5);
\node at (-.9,1.2) {$\alpha$};
\node at (-.9,-1.2) {$\beta$};
\node at (.9,1.2) {$\delta$};
\node at (.9,-1.2) {$\gamma$};
\end{scope}

\node at (-2,0) {$\circ$};
\node at (0,2) {$\circ$};
\node at (0,-2) {$\circ$};
\node at (2,0) {$\circ$};
\node at (-.66,0) {$\bullet$};
\node at (.66,0) {$\bullet$};
\draw (-1.95,0)--(-.66,0);
\draw (.66,0)--(1.95,0);
\draw (-.05,-1.95)--(-.66,0)--(-.05,1.95);
\draw (.05,-1.95)--(.66,0)--(.05,1.95);
\node at (-.8,-1) {$\alpha$};
\node at (-.8,1) {$\beta$};
\node at (.8,1) {$\gamma$};
\node at (.8,-1) {$\delta$};
\node at (-1.3,.3) {$\gamma+\delta$};
\node at (1.3,.3) {$\alpha+\beta$};
\begin{scope}[xshift = 5cm]
\node at (-2,0) {$\circ$};
\node at (0,2) {$\circ$};
\node at (0,-2) {$\circ$};
\node at (2,0) {$\circ$};
\node at (0,-.66) {$\bullet$};
\node at (0,.66) {$\bullet$};
\draw (0,-1.95)--(0,-.66);
\draw (0,.66)--(0,1.95);
\draw (-1.95,-.05)--(0,-.66)--(1.95,-.05);
\draw (-1.95,.05)--(0,.66)--(1.95,.05);
\node at (-.8,-.8) {$\gamma$};
\node at (-.8,.8) {$\delta$};
\node at (.8,.8) {$\alpha$};
\node at (.8,-.8) {$\beta$};
\node at (.65,-1.5) {$\alpha+\delta$};
\node at (.65,1.5) {$\beta+\gamma$};
\end{scope}
\end{tikzpicture}
\caption{The left picture indicates schematically a diagonal of $\mathfrak{t} \setminus \mathfrak{d}$ (drawn dashed) which can be flipped inside its quadrilateral to obtain a new triangulation $\mathfrak{t}'$ . The numbers $\alpha,\dots,\delta$ indicate the weights of the four polygons outside this quadrilateral. The middle picture and right diagrams are the local picture for the corresponding web diagrams $W(\mathfrak{t})$ and $W(\mathfrak{t}')$ inside the quadrilateral. One has $[W(\mathfrak{t})]=[W(\mathfrak{t}')]$ by a skein relation. See Figure~\ref{fig:examplesquaremove} for an instance of this for the web in our running example with $\alpha=6$ and $\beta = \gamma = \delta=1$.}\label{fig:skein}
\end{figure}
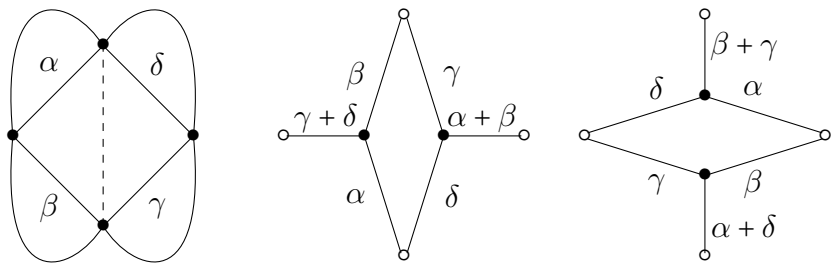

\begin{remark}
When the dissection $\mathfrak{d}_T$ happens to be a triangulation there is no freedom in our constructions. In particular, this happens whenever the number of short arcs of $M$ equals either two or three. Such $M$ are called {\sl tripartite pairings} and have recently arisen in a seemingly different mathematical context \cite{Jenne,JWY}. We think it should be possible to state the main identity from \cite{Jenne} as a quadratic relation between ${\rm SL}_2$ web immanants. 

When $M$ has exactly two short arcs the corresponding web invariant is a cluster monomial $\Delta_I\Delta_{[2k] \setminus I}$ for a cyclic interval $I \in \binom{[2k]}k$. Its web diagram is a disjoint union of two claw graphs. When $M$ has exactly three short arcs, the web diagram is a tree with one trivalent black vertex connected to three interior white vertices. This special case of our construction was previously formulated by I.~Le and E.~Yildirim who prove moreover that these web invariants are cluster variables. 
 \end{remark}
 
\begin{figure}[ht]\centering
\begin{tikzpicture}[scale = 1.1]
\begin{scope}[xshift=-7.5cm]
\draw (0:2.5cm)--(.4*360/18:1.85cm);
\draw (1*360/18:2.5cm)--(.6*360/18:1.85cm);
\node at (.5*360/18:1.8cm) {$\circ$};
\node at (0:2.5cm) {$\bullet$};
\node at (360/18:2.5cm) {$\bullet$};

\draw (16*360/18:2cm)--(16.4*360/18:1.6cm);
\draw (17*360/18:2cm)--(16.6*360/18:1.6cm);
\node at (16.5*360/18:1.5cm) {$\circ$};
\node at (16*360/18:2cm) {$\bullet$};
\node at (17*360/18:2cm) {$\bullet$};

\draw (2*360/18:2cm)--(-5+3*360/18:1.6cm);
\draw (4*360/18:2cm)--(5+3*360/18:1.6cm);
\draw (3*360/18:2cm)--(3*360/18:1.65cm);
\node at (3*360/18:1.5cm) {$\circ$};
\node at (2*360/18:2cm) {$\bullet$};
\node at (3*360/18:2cm) {$\bullet$};
\node at (4*360/18:2cm) {$\bullet$};

\draw (5*360/18:2cm)--(-5+6*360/18:1.6cm);
\draw (7*360/18:2cm)--(5+6*360/18:1.6cm);
\draw (6*360/18:2cm)--(6*360/18:1.65cm);
\node at (6*360/18:1.5cm) {$\circ$};
\node at (5*360/18:2cm) {$\bullet$};
\node at (6*360/18:2cm) {$\bullet$};
\node at (7*360/18:2cm) {$\bullet$};

\draw (8*360/18:2cm)--(-7.5+9.6*360/18:1.6cm);
\draw (9*360/18:2cm)--(-2.5+9.5*360/18:1.65cm);
\draw (10*360/18:2cm)--(2.5+9.5*360/18:1.65cm);
\draw (11*360/18:2cm)--(5+9.6*360/18:1.6cm);
\node at (9.5*360/18:1.5cm) {$\circ$};
\node at (8*360/18:2cm) {$\bullet$};
\node at (9*360/18:2cm) {$\bullet$};
\node at (10*360/18:2cm) {$\bullet$};
\node at (11*360/18:2cm) {$\bullet$};

\draw (12*360/18:2cm)--(-7.5+13.6*360/18:1.6cm);
\draw (13*360/18:2cm)--(-2.5+13.5*360/18:1.65cm);
\draw (14*360/18:2cm)--(2.5+13.5*360/18:1.65cm);
\draw (15*360/18:2cm)--(5+13.6*360/18:1.6cm);
\node at (13.5*360/18:1.5cm) {$\circ$};
\node at (12*360/18:2cm) {$\bullet$};
\node at (13*360/18:2cm) {$\bullet$};
\node at (14*360/18:2cm) {$\bullet$};
\node at (15*360/18:2cm) {$\bullet$};

\node at (-.8,.95) {\small 6};
\node at (-.2,1.1) {\small 2};
\node at (-.3,.1) {\small 2};
\node at (.3,-.28) {\textcolor{red}{\small 1}};
\node at (1.08,.6) {\textcolor{red}{\small 1}};
\node at (1.35,-.3) {\textcolor{red}{\small 1}};
\node at (-1.0,-.55) {\small 4};
\node at (-.9,.60) {\small 1};
\node at (-.4,-.95) {\small 3};
\node at (.35,-1.1) {\textcolor{red}{\small 2}};
\node at (.75,-.85) {\textcolor{red}{\small 6}};
\node at (1.45,.1) {\textcolor{red}{\small 7}};

\node at (6*360/18:.95cm) {$\bullet$};
\draw (6*360/18:1.4cm)--(6*360/18:.95cm);
\draw (-2+9.5*360/18:1.35cm)--(6*360/18:.95cm);
\draw (3*360/18:1.35cm)--(6*360/18:.95cm);

\node at (11.5*360/18:.6cm) {$\bullet$};
\draw (-2+9.5*360/18:1.35cm)--(11.5*360/18:.6cm);
\draw (3*360/18:1.35cm)--(11.5*360/18:.6cm);
\draw (13.5*360/18:1.35cm)--(11.5*360/18:.6cm);

\node[color=red] at (15*360/18:.75cm) {$\bullet$};
\draw[color=red] (-2+3*360/18:1.4cm)--(15*360/18:.75cm);
\draw[color=red] (13.5*360/18:1.35cm)--(15*360/18:.75cm);
\draw[color=red] (-2+16.5*360/18:1.35cm)--(15*360/18:.75cm);

\node at (.5*360/18:1.1cm) {\textcolor{red}{$\bullet$}};
\draw[color=red] (3*360/18:1.4cm)--(.5*360/18:1.1cm);
\draw[color=red] (.5*360/18:1.75cm)--(.5*360/18:1.1cm);
\draw[color=red] (16.5*360/18:1.4cm)--(.5*360/18:1.1cm);

\node at (360/18:2.75cm) {7};
\node at (2*360/18:2.4cm) {6};
\node at (3*360/18:2.4cm) {5};
\node at (4*360/18:2.4cm) {4};
\node at (5*360/18:2.4cm) {3};
\node at (6*360/18:2.4cm) {2};
\node at (7*360/18:2.4cm) {1};
\node at (8*360/18:2.4cm) {18};
\node at (9*360/18:2.4cm) {17};
\node at (10*360/18:2.4cm) {16};
\node at (11*360/18:2.4cm) {15};
\node at (12*360/18:2.4cm) {14};
\node at (13*360/18:2.4cm) {13};
\node at (14*360/18:2.4cm) {12};
\node at (15*360/18:2.4cm) {11};
\node at (16*360/18:2.4cm) {10};
\node at (17*360/18:2.4cm) {9};
\node at (0:2.75cm) {8};
\end{scope}

\draw (0:2.5cm)--(.4*360/18:1.85cm);
\draw (1*360/18:2.5cm)--(.6*360/18:1.85cm);
\node at (.5*360/18:1.8cm) {$\circ$};
\node at (0:2.5cm) {$\bullet$};
\node at (360/18:2.5cm) {$\bullet$};

\draw (16*360/18:2cm)--(16.4*360/18:1.6cm);
\draw (17*360/18:2cm)--(16.6*360/18:1.6cm);
\node at (16.5*360/18:1.5cm) {$\circ$};
\node at (16*360/18:2cm) {$\bullet$};
\node at (17*360/18:2cm) {$\bullet$};

\draw (2*360/18:2cm)--(-5+3*360/18:1.6cm);
\draw (4*360/18:2cm)--(5+3*360/18:1.6cm);
\draw (3*360/18:2cm)--(3*360/18:1.65cm);
\node at (3*360/18:1.5cm) {$\circ$};
\node at (2*360/18:2cm) {$\bullet$};
\node at (3*360/18:2cm) {$\bullet$};
\node at (4*360/18:2cm) {$\bullet$};

\draw (5*360/18:2cm)--(-5+6*360/18:1.6cm);
\draw (7*360/18:2cm)--(5+6*360/18:1.6cm);
\draw (6*360/18:2cm)--(6*360/18:1.65cm);
\node at (6*360/18:1.5cm) {$\circ$};
\node at (5*360/18:2cm) {$\bullet$};
\node at (6*360/18:2cm) {$\bullet$};
\node at (7*360/18:2cm) {$\bullet$};

\draw (8*360/18:2cm)--(-7.5+9.6*360/18:1.6cm);
\draw (9*360/18:2cm)--(-2.5+9.5*360/18:1.65cm);
\draw (10*360/18:2cm)--(2.5+9.5*360/18:1.65cm);
\draw (11*360/18:2cm)--(5+9.6*360/18:1.6cm);
\node at (9.5*360/18:1.5cm) {$\circ$};
\node at (8*360/18:2cm) {$\bullet$};
\node at (9*360/18:2cm) {$\bullet$};
\node at (10*360/18:2cm) {$\bullet$};
\node at (11*360/18:2cm) {$\bullet$};

\draw (12*360/18:2cm)--(-7.5+13.6*360/18:1.6cm);
\draw (13*360/18:2cm)--(-2.5+13.5*360/18:1.65cm);
\draw (14*360/18:2cm)--(2.5+13.5*360/18:1.65cm);
\draw (15*360/18:2cm)--(5+13.6*360/18:1.6cm);
\node at (13.5*360/18:1.5cm) {$\circ$};
\node at (12*360/18:2cm) {$\bullet$};
\node at (13*360/18:2cm) {$\bullet$};
\node at (14*360/18:2cm) {$\bullet$};
\node at (15*360/18:2cm) {$\bullet$};

\node at (-.8,.95) {\small 6};
\node at (-.2,1.1) {\small 2};
\node at (-.3,.1) {\small 2};
\node at (-1.0,-.55) {\small 4};
\node at (-.9,.60) {\small 1};
\node at (-.4,-.95) {\small 3};

\node at (6*360/18:.95cm) {$\bullet$};
\draw (6*360/18:1.4cm)--(6*360/18:.95cm);
\draw (-2+9.5*360/18:1.35cm)--(6*360/18:.95cm);
\draw (3*360/18:1.35cm)--(6*360/18:.95cm);

\node at (11.5*360/18:.6cm) {$\bullet$};
\draw (-2+9.5*360/18:1.35cm)--(11.5*360/18:.6cm);
\draw (3*360/18:1.35cm)--(11.5*360/18:.6cm);
\draw (13.5*360/18:1.35cm)--(11.5*360/18:.6cm);

\node at (360/18:2.75cm) {7};
\node at (2*360/18:2.4cm) {6};
\node at (3*360/18:2.4cm) {5};
\node at (4*360/18:2.4cm) {4};
\node at (5*360/18:2.4cm) {3};
\node at (6*360/18:2.4cm) {2};
\node at (7*360/18:2.4cm) {1};
\node at (8*360/18:2.4cm) {18};
\node at (9*360/18:2.4cm) {17};
\node at (10*360/18:2.4cm) {16};
\node at (11*360/18:2.4cm) {15};
\node at (12*360/18:2.4cm) {14};
\node at (13*360/18:2.4cm) {13};
\node at (14*360/18:2.4cm) {12};
\node at (15*360/18:2.4cm) {11};
\node at (16*360/18:2.4cm) {10};
\node at (17*360/18:2.4cm) {9};
\node at (0:2.75cm) {8};

\node at (.9,.75) {\textcolor{red}{\small 2}};
\node at (1.2,.5) {\textcolor{red}{\small 6}};
\node at (1.45,-.25) {\textcolor{red}{\small 1}};
\node at (.2,-.3) {\textcolor{red}{\small 1}};
\node at (.4,-1.3) {\textcolor{red}{\small 1}};
\node at (.95,-1.0) {\textcolor{red}{\small 7}};

\node[color=red] at (15.65*360/18:1.1cm) {$\bullet$};
\node[color=red] at (1.0*360/18:.6cm) {$\bullet$};
\draw[color=red] (15.65*360/18:1.1cm)--(16.35*360/18:1.4cm);
\draw[color=red] (15.65*360/18:1.1cm)--(13.75*360/18:1.35cm);
\draw[color=red] (15.65*360/18:1.1cm)--(.5*360/18:1.75cm);
\draw[color=red] (1.0*360/18:.6cm)--(2.9*360/18:1.4cm);
\draw[color=red] (1.0*360/18:.6cm)--(13.55*360/18:1.35cm);
\draw[color=red] (1.0*360/18:.6cm)--(.5*360/18:1.75cm);

\end{tikzpicture}
\caption{The left web is $W(\mathfrak{t})$ from our running example, cf.~Figures~\ref{fig:mainexample} and~\ref{fig:intermediateobjects}. The right web is $W(\mathfrak{t}')$ where $\mathfrak{t}'$ is the weighted triangulation obtained by the flip move at the right dashed edge in $\mathfrak{t}$. These two webs are related by a square switch skein relation at the red edges, thus define the same web invariant.}
\label{fig:examplesquaremove}
\end{figure}
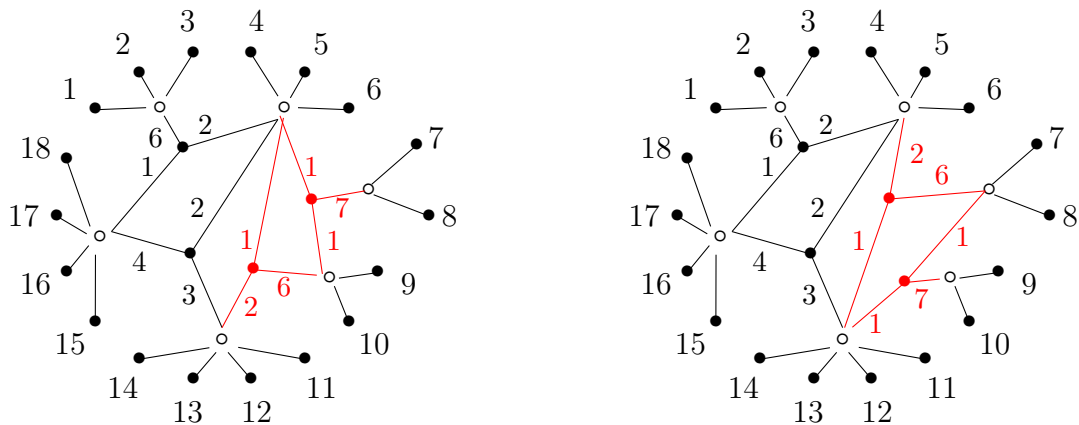
 
\section{Webs as plabic graphs}\label{secn:positroids}
Forgetting the data of the edge multiplicities, any type 2 web (and as a special case, any standard web) is in particular a plabic graph in the sense of \cite{Postnikov}. It is interesting to investigate how tight the connection between webs and plabic graphs is, see also \cite{FLL,HopkinsRubey}. In this section, we explore the tableau-to-web map as a tableau-to-plabic-graph map. The results in this section are mostly independent from the rest of the paper, although we will use the language of plabic graphs and dimer covers to a small extent in the proof of Theorem~\ref{thm:duality}.

We assume familiarity with the notions of positroids $\mathcal{M} \subset \binom{[n]}k$ and their encodings by decorated permutations $\pi$, Grassmann necklaces $\rmci$, or reduced plabic graphs~$G$ modulo move equivalence. We work always with graphs which are bipartite (not just bicolored) with boundary vertices colored black.

Let $G$ be a reduced plabic graph in the $n$-gon. A {\sl dimer cover} $\pi$ of $G$ is a 
subset $\pi \subset E(G)$ which covers each interior vertex once. We denote by $\partial(\pi) \subset [n]$ the subset of boundary vertices used in $\pi$. Observe that 
\begin{equation}\label{eq:picount}
\#\partial(\pi) = \# \text{ of white vertices of $G$} - \# \text{ of interior black vertices of $G$}.
\end{equation}
In particular this number depends only on $G$ not on $\pi$. We denote this number by $k$ and say that $G$ has type $(k,n)$. 
\begin{remark}
When $W = W(\mathfrak{t}_T)$ is a tableau-to-web image of some $T \in {\rm SYT}(2\omega_k)$, the number of white vertices of $W$ is the number $s = s(M(T))$ of short arcs while the number of interior black vertices is the number of triangles in a triangulation of the $s$-gon. It follows that the number~\eqref{eq:picount} equals~$2$ for $W$, i.e. that $W$ is a plabic graph of type~$(2,2k)$. 
\end{remark}

\begin{definition}[Cyclic interval positroids]\label{defn:cyclicpos}
Let $C_1,\dots,C_s$ be a decomposition of $[2k]$ into disjoint cyclic intervals. We obtain a positroid $\mathcal{M}_{C_1,\dots,C_s} \subset \binom{[2k]}2$ whose bases are exactly the pairs $\{i,j\}$ with the property that $i \in C_a$ and $j \in C_b$ with $a \neq b$.  
\end{definition}

The positroid subvariety of ${\rm Gr}(2,2k)$  corresponding to $\mathcal{M}_{C_1,\dots,C_s} \subset \binom{[2k]}2$ imposes the cyclic rank condition 
\begin{equation}\label{eq:rankconditions}
{\rm rank}(v_i \colon i \in C_a) \leq 1
\end{equation}
with one such rank condition for each $a=1,\dots,s$, where $v_1,\dots,v_{2k}$ are the column vectors of $2 \times 2k$ matrix representatives for points in ${\rm Gr}(2,2k)$.

\begin{proposition}\label{prop:whichpositroid}
Let $W(\mathfrak{t}_T)$ be an ${\rm SL}_k$ web diagram arising by evaluating the tableau-to-web map on $T \in {\rm SYT}(2\omega_k)$. Let $C_1,\dots,C_S$ be the color sets of the noncrossing matching $M(T)$. Then $W(\mathfrak{t}_T)$ is a reduced plabic graph whose corresponding positroid is $\mathcal{M}_{C_1,\dots,C_s}$.
\end{proposition}

\begin{proof}
Abbreviate $W = W(\mathfrak{t}_T)$. Let $C_j = (i_{j-1},i_{j}]$ be a color set of $M(T)$. Then the boundary vertices in $C_j$ are incident to the same white vertex $w_j$ of $W$. For $b = i_j,\dots,i_{j+1}-2$, the zizag path starting at $b$ (also known as the {\sl trip} starting at $b$) proceeds to $w_j$ and then ends at vertex $b+1$. It remains to understand the zigzag paths starting at $i_j-1$. The ``interior'' of~$W$ is the plabic graph dual to a triangulation of the $s$-gon and the trips starting at $i_j-1$'s are identified with trips in this smaller plabic graph. The zigzag paths of this smaller graph encode the top cell for ${\rm Gr}(2,s)$, i.e. each such zigzag path proceeds two units clockwise. Using this reasoning, one can see that the zigzag path of starting at $i_j-1$ ends at $i_{j+2}$. We now see that $G$ is reduced and moreover we have computed its trip permutation. Using the standard recipes to translate from trip permutations to rank conditions, we see that the rank conditions \eqref{eq:rankconditions} hold; on the other hand, by comparing with the top cell for ${\rm Gr}(2,s)$, we see that these are the {\sl only} rank conditions which hold, completing the proof. 
\end{proof}

We conclude this section with two remarks which explore further properties of the tableau-to-plabic-graph map.

\begin{remark}The data of $W$ as a plabic graph does not specify the corresponding canonical basis element $[W]$. For example, the pair of weighted triangulations
\begin{equation}\label{eq:samegraphdifferentweb}
\begin{tikzpicture}
\node at (1,0) {$\bullet$};
\node at (0,0) {$\bullet$};
\node at (0,1) {$\bullet$};
\node at (1,1) {$\bullet$};
\draw (0,0)--(1,0)--(1,1)--(0,1)--(0,0);
\draw (0,0)--(1,1);
\node at (.5,1.25) {\small 2};
\node at (.5,-.25) {\small 2};
\node at (-.25,.5) {\small 2};
\node at (1.25,.5) {\small 2};
\node at (.5,.75) {\small 1};
\node at (.5,-.75) {$\mathfrak{t}$};
\begin{scope}[xshift = 3cm]
\node at (1,0) {$\bullet$};
\node at (0,0) {$\bullet$};
\node at (0,1) {$\bullet$};
\node at (1,1) {$\bullet$};
\draw (0,0)--(1,0)--(1,1)--(0,1)--(0,0);
\draw (0,0)--(1,1);
\node at (.5,1.25) {\small 1};
\node at (.5,-.25) {\small 1};
\node at (-.25,.5) {\small 3};
\node at (1.25,.5) {\small 3};
\node at (.5,.75) {\small 1};
\node at (.5,-.75) {$\mathfrak{t}'$};
\end{scope}
\end{tikzpicture}
\end{equation}
determine ${\rm SL}_9$ webs $W(\mathfrak{t})$ and $W(\mathfrak{t}')$ with the same underlying plabic graph but with different edge multiplicities. As follows from Theorem~\ref{thm:duality}, these two triangulations are dual to distinct matchings of the $18$-gon, hence $[W(\mathfrak{t})$ and $[W(\mathfrak{t}')]$ correspond to distinct canonical basis elements. 
\end{remark}

\begin{remark}
Not every decomposition $[2k] = C_1 \coprod \cdots \coprod C_s$ into cyclic intervals arises from a noncrossing matching. For example, it is necessary that each $|C_i| \geq 2$. More substantively, there is no noncrossing matching of the octagon whose color sets have cardinalities $4$, $2$, and $2$. Even if we consider a set of cyclic intervals $C_1 \coprod \cdots \coprod C_s$ which {\sl do} come from a matching, not every reduced plabic graph for the positroid $\mathcal{M}_{C_1,\dots,C_s}$ can be assigned edge multiplicities in such a way that the corresponding web invariant is a canonical basis element. 

For instance, the plabic graph
\begin{equation}\label{eq:notcanonical}
\begin{tikzpicture}
\node at (0*36:1.5cm) {$\bullet$};
\node at (0*36:1.95cm) {$5$};
\node at (1*36:1.5cm) {$\bullet$};
\node at (1*36:1.95cm) {$4$};
\node at (2*36:1.5cm) {$\bullet$};
\node at (2*36:1.95cm) {$3$};
\node at (3*36:1.5cm) {$\bullet$};
\node at (3*36:1.95cm) {$2$};
\node at (4*36:1.5cm) {$\bullet$};
\node at (4*36:1.95cm) {$1$};
\node at (5*36:1.5cm) {$\bullet$};
\node at (5*36:1.95cm) {$10$};
\node at (6*36:1.5cm) {$\bullet$};
\node at (6*36:1.95cm) {$9$};
\node at (7*36:1.5cm) {$\bullet$};
\node at (7*36:1.95cm) {$8$};
\node at (8*36:1.5cm) {$\bullet$};
\node at (8*36:1.95cm) {$7$};
\node at (9*36:1.5cm) {$\bullet$};
\node at (9*36:1.95cm) {$6$};
\draw (3*36:1.5cm)--(-2+3.5*36:1.05cm);
\draw (4*36:1.5cm)--(2+3.5*36:1.05cm);
\draw (9*36:1.5cm)--(2+8.5*36:1.05cm);
\draw (8*36:1.5cm)--(-2+8.5*36:1.05cm);
\draw (1*36:1.5cm)--(1*36:1.1cm);
\draw (0*36:1.5cm)--(-2+1*36:1.05cm);
\draw (2*36:1.5cm)--(2+1*36:1.05cm);
\draw (6*36:1.5cm)--(6*36:1.1cm);
\draw (5*36:1.5cm)--(-2+6*36:1.05cm);
\draw (7*36:1.5cm)--(2+6*36:1.05cm);
\node at (3.5*36:1.05cm) {$\circ$};
\node at (1*36:1.05cm) {$\circ$};
\node at (6*36:1.05cm) {$\circ$};
\node at (8.5*36:1.05cm) {$\circ$};
\node at (1*36:.35cm) {$\bullet$};
\node at (6*36:.35cm) {$\bullet$};

\draw (-2+3.5*36:1.05cm)--(1*36:.35cm);
\draw (2+3.5*36:1.05cm)--(6*36:.35cm);
\draw (-2+8.5*36:1.05cm)--(1*36:.35cm);
\draw (2+8.5*36:1.05cm)--(6*36:.35cm);
\draw (1*36:.95cm)--(1*36:.35cm);
\draw (6*36:.95cm)--(6*36:.35cm);

\end{tikzpicture}
\end{equation}
can only be made into an ${\rm SL}_5$ web diagram in a unique way up to reflection 
in the 10-gon. One can check that the resulting web diagram has nontrivial pairing with (at least) two different noncrossing matchings in the sense of Definition~\ref{defn:balancedwordcombinatorics} and is therefore not a canonical basis element. 

We can explain ``what goes wrong'' with this plabic graph as follows. There is a unique matching $M$ of the 10-gon whose color sets are $[1,2]$, $[3,4,5]$, $[6,7]$, and $[8,9,10]$. The corresponding dissection $\mathfrak{d}$ is a triangulated quadrilateral and the above plabic graph \eqref{eq:notcanonical} corresponds to the ``other'' triangulation of this quadrilateral, which is forbidden.
\end{remark}

\section{Web-matching duality}\label{secn:duality}
We start by reviewing some classical facts and some additional results from \cite{FLL}. 

We denote by $S(\lambda)$ the irreducible representation of the symmetric group $\mathfrak{S}_n$ indexed by a partition 
$\lambda$ of $n$. (Precisely, this means that the character $\chi_\lambda$ of $S(\lambda)$ satisfies the equality $\frac{1}{n!}\sum_{w \in \mathfrak{S}_n}\chi_\lambda(w)p_{\alpha(w)} = s_\lambda$ where $s_\lambda$ is the schur function, $\alpha(w)$ is the cycle type of $w$ written as a partition, and $p_{\alpha(w)}$ is the corresponding monomial in power sum symmetric functions.) Recall that each $\mathfrak{S}_n$-irrep is self-dual: $S(\lambda) \cong S(\lambda)^*$ where asterisk denotes duality of representations.

Let $\varepsilon = S(\omega_n)$ denote the sign representation which is 1-dimensional. If $\lam$ and $\mu$ are conjugate partitions one knows that $S(\lambda) \cong S(\mu) \otimes \varepsilon$. By Schur's lemma, there is therefore a unique $\mathfrak{S}_n$-equivariant map $S(\lambda)^* \to S(\mu) \otimes \varepsilon$, or equivalently, a unique equivariant pairing $S(\lambda) \otimes S(\mu) \to \varepsilon$.

The action of $\mathfrak{S}_n$ on $\mathbb{C}^n$ by coordinate permutation induces an action on the coordinate ring $\mathbb{C}[{\rm Gr}(k,n)]$. Let 
$\mathbb{C}[{\rm Gr}(k,dk)]^{\rm std}$ denote the subspace spanned by canonical basis vectors indexed by standard Young tableaux.
One knows that 
$$\mathbb{C}[{\rm Gr}(k,dk)]^{\rm std} \cong S(d\omega_k)$$
as $\mathfrak{S}_{dk}$-modules. From the previous paragraph, one has a canonical pairing 
\begin{equation}\label{eq:pairing}
\langle,\rangle \colon \, \mathbb{C}[{\rm Gr}(k,dk)]^{\rm std}\otimes \mathbb{C}[{\rm Gr}(d,dk)]^{\rm std} \to \varepsilon.
\end{equation}
We will use this pairing to show that our tableau-to-web map is 
``the right one.'' 

Our next definition allows for explicit computations with this pairing. 
\begin{definition}
Let $W$ be an ${\rm SL}_k$ web diagram. A {\sl consistent labeling} $\ell$ of~$W$ is a choice of subset $\ell(e) \in \binom{[k]}{{\rm mult}(e)}$ for each edge $e$ of $W$ subject to the requirement that $\ell(e)$ and $\ell(e')$ are disjoint whenever $e,e'$ are incident edges of $W$.
\end{definition}

Since the multiplicities around every vertex sum to $k$, the union of the subsets around every interior vertex equals $[k]$.

When $W$ is a standard ${\rm SL}_k$ web and $\ell$ is a consistent labeling, the labels of the boundary edges $e_i$ for $i=1,\dots,dk$ determine a sequence of singleton subsets $\ell(e_1),\dots,\ell(e_n)$ of $[k]$. One can show that each number from $1,\dots,k$ appears exactly $d$ many times in this sequence. We call this the {\sl boundary word} of $\ell$ and denote it by ${\bf w}(\ell)$.

We now introduce several combinatorial gadgets which one can attach to such a word. Consider the set 
$$\mathcal{A}_{k,d} = \{1_1,\dots,1_d,2_1,\dots,2_d,\cdots,k_1,\dots,k_d\}$$
with its symbols linearly ordered in the way we have just written them.

\begin{definition}\label{defn:balancedwordcombinatorics}
A $(k,d)$ {\sl balanced  word} {\bf w} is a word of length $dk$ in which each symbol from $1,\dots,k$ appears exactly $d$ times. 

We associate a sign to such word ${\bf w}$ as follows. We upgrade~{\bf w} to a permutation of the ordered set $\mathcal{A}_{k,d}$ by adding 
subscripts to the appearances of symbol $i$ so that subscripts increase from left to right in ${\bf w}$. Then we set {\sl sign}({\bf w}) to be the sign of this permutation. 

We associate a monomial in Pl\"ucker coordinates to such ${\bf w}$  as follows:
$$\Delta({\bf w}):= \prod_{j=1}^k \Delta_{\{i \in [dk] \colon {\bf w}_i = j\}} \in  \mathbb{C}[{\rm Gr}(d,dk)]^{\rm std}.$$

Finally, given a standard ${\rm SL}_k$ web $W$ of degree $d$, we define a natural number 
$$a(W,{\bf w}) = \#\{\ell \colon \, {\bf w}(\ell) = {\bf w}\},$$
the number of consistent labelings of $W$ with boundary word ${\bf w}$.
\end{definition}

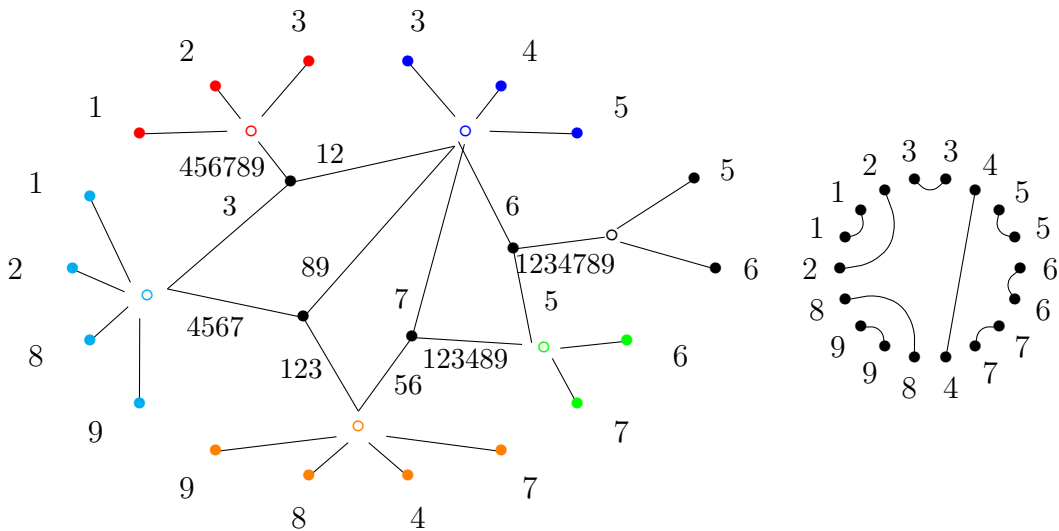
\begin{figure}\centering
\begin{tikzpicture}[xscale = 1.9,yscale = 1.4]
\draw (0:2.5cm)--(.4*360/18:1.85cm);
\draw (1*360/18:2.5cm)--(.6*360/18:1.85cm);
\node at (.5*360/18:1.8cm) {$\circ$};
\node at (0:2.5cm) {$\bullet$};
\node at (360/18:2.5cm) {$\bullet$};

\draw (16*360/18:2cm)--(16.4*360/18:1.6cm);
\draw (17*360/18:2cm)--(16.6*360/18:1.6cm);
\node at (16.5*360/18:1.5cm) {\textcolor{green}{$\circ$}};
\node at (16*360/18:2cm) {\textcolor{green}{$\bullet$}};
\node at (17*360/18:2cm) {\textcolor{green}{$\bullet$}};

\draw (2*360/18:2cm)--(-5+3*360/18:1.6cm);
\draw (4*360/18:2cm)--(5+3*360/18:1.6cm);
\draw (3*360/18:2cm)--(3*360/18:1.65cm);
\node at (3*360/18:1.5cm) {\textcolor{blue}{$\circ$}};
\node at (2*360/18:2cm) {\textcolor{blue}{$\bullet$}};
\node at (3*360/18:2cm) {\textcolor{blue}{$\bullet$}};
\node at (4*360/18:2cm) {\textcolor{blue}{$\bullet$}};

\draw (5*360/18:2cm)--(-5+6*360/18:1.6cm);
\draw (7*360/18:2cm)--(5+6*360/18:1.6cm);
\draw (6*360/18:2cm)--(6*360/18:1.65cm);
\node at (6*360/18:1.5cm) {\textcolor{red}{$\circ$}};
\node at (5*360/18:2cm) {\textcolor{red}{$\bullet$}};
\node at (6*360/18:2cm) {\textcolor{red}{$\bullet$}};
\node at (7*360/18:2cm) {\textcolor{red}{$\bullet$}};

\draw (8*360/18:2cm)--(-7.5+9.6*360/18:1.6cm);
\draw (9*360/18:2cm)--(-2.5+9.5*360/18:1.65cm);
\draw (10*360/18:2cm)--(2.5+9.5*360/18:1.65cm);
\draw (11*360/18:2cm)--(5+9.6*360/18:1.6cm);
\node at (9.5*360/18:1.5cm) {\textcolor{cyan}{$\circ$}};
\node at (8*360/18:2cm) {\textcolor{cyan}{$\bullet$}};
\node at (9*360/18:2cm) {\textcolor{cyan}{$\bullet$}};
\node at (10*360/18:2cm) {\textcolor{cyan}{$\bullet$}};
\node at (11*360/18:2cm) {\textcolor{cyan}{$\bullet$}};

\draw (12*360/18:2cm)--(-7.5+13.6*360/18:1.6cm);
\draw (13*360/18:2cm)--(-2.5+13.5*360/18:1.65cm);
\draw (14*360/18:2cm)--(2.5+13.5*360/18:1.65cm);
\draw (15*360/18:2cm)--(5+13.6*360/18:1.6cm);
\node at (13.5*360/18:1.5cm) {\textcolor{orange}{$\circ$}};
\node at (12*360/18:2cm) {\textcolor{orange}{$\bullet$}};
\node at (13*360/18:2cm) {\textcolor{orange}{$\bullet$}};
\node at (14*360/18:2cm) {\textcolor{orange}{$\bullet$}};
\node at (15*360/18:2cm) {\textcolor{orange}{$\bullet$}};

\node at (-.95,.97) {\small 456789};
\node at (-.2,1.1) {\small 12};
\node at (-.9,.60) {\small 3};
\node at (-1.0,-.55) {\small 4567};
\node at (1.45,.05) {\small 1234789};
\node at (1.35,-.3) {\small 5};
\node at (1.08,.6) {\small 6};
\node at (.75,-.85) {\small 123489};
\node at (.35,-1.1) {\small 56};
\node at (.3,-.28) {\small 7};
\node at (-.3,.02) {\small 89};
\node at (-.4,-.95) {\small 123};

\node at (6*360/18:.95cm) {$\bullet$};
\draw (6*360/18:1.4cm)--(6*360/18:.95cm);
\draw (-2+9.5*360/18:1.35cm)--(6*360/18:.95cm);
\draw (3*360/18:1.35cm)--(6*360/18:.95cm);

\node at (11.5*360/18:.6cm) {$\bullet$};
\draw (-2+9.5*360/18:1.35cm)--(11.5*360/18:.6cm);
\draw (3*360/18:1.35cm)--(11.5*360/18:.6cm);
\draw (13.5*360/18:1.35cm)--(11.5*360/18:.6cm);

\node at (15*360/18:.75cm) {$\bullet$};
\draw (-2+3*360/18:1.4cm)--(15*360/18:.75cm);
\draw (13.5*360/18:1.35cm)--(15*360/18:.75cm);
\draw (-2+16.5*360/18:1.35cm)--(15*360/18:.75cm);

\node at (.5*360/18:1.1cm) {$\bullet$};
\draw (3*360/18:1.4cm)--(.5*360/18:1.1cm);
\draw (.5*360/18:1.75cm)--(.5*360/18:1.1cm);
\draw (16.5*360/18:1.4cm)--(.5*360/18:1.1cm);


\node at (360/18:2.75cm) {5};
\node at (2*360/18:2.4cm) {5};
\node at (3*360/18:2.4cm) {4};
\node at (4*360/18:2.4cm) {3};
\node at (5*360/18:2.4cm) {3};
\node at (6*360/18:2.4cm) {2};
\node at (7*360/18:2.4cm) {1};
\node at (8*360/18:2.4cm) {1};
\node at (9*360/18:2.4cm) {2};
\node at (10*360/18:2.4cm) {8};
\node at (11*360/18:2.4cm) {9};
\node at (12*360/18:2.4cm) {9};
\node at (13*360/18:2.4cm) {8};
\node at (14*360/18:2.4cm) {4};
\node at (15*360/18:2.4cm) {7};
\node at (16*360/18:2.4cm) {7};
\node at (17*360/18:2.4cm) {6};
\node at (0:2.75cm) {6};

\node at (4,0) {
\begin{tikzpicture}[scale = .8]
\draw [rounded corners] (360/18:1.5cm)--(1.5*360/18:1.25cm)--(2*360/18:1.5cm);
\draw [rounded corners] (4*360/18:1.5cm)--(4.5*360/18:1.25cm)--(5*360/18:1.5cm);
\draw [rounded corners] (7*360/18:1.5cm)--(7.5*360/18:1.25cm)--(8*360/18:1.5cm);
\draw [rounded corners] (11*360/18:1.5cm)--(11.5*360/18:1.25cm)--(12*360/18:1.5cm);
\draw [rounded corners] (15*360/18:1.5cm)--(15.5*360/18:1.25cm)--(16*360/18:1.5cm);
\draw [rounded corners] (17*360/18:1.5cm)--(17.5*360/18:1.25cm)--(18*360/18:1.5cm);
\draw [rounded corners] (6*360/18:1.5cm)--(6.25*360/18:1.0cm)--(7.5*360/18:.75cm)--(8.75*360/18:1.0cm)--(9*360/18:1.5cm);
\draw [rounded corners] (10*360/18:1.5cm)--(10.25*360/18:1.0cm)--(11.5*360/18:.75cm)--(12.75*360/18:1.0cm)--(13*360/18:1.5cm);
\draw (3*360/18:1.5cm)--(14*360/18:1.5cm);
\node at (0:1.5cm) {$\bullet$};
\node at (360/18:1.5cm) {$\bullet$};
\node at (2*360/18:1.5cm) {$\bullet$};
\node at (3*360/18:1.5cm) {$\bullet$};
\node at (4*360/18:1.5cm) {$\bullet$};
\node at (5*360/18:1.5cm) {$\bullet$};
\node at (6*360/18:1.5cm) {$\bullet$};
\node at (7*360/18:1.5cm) {$\bullet$};
\node at (8*360/18:1.5cm) {$\bullet$};
\node at (9*360/18:1.5cm) {$\bullet$};
\node at (10*360/18:1.5cm) {$\bullet$};
\node at (11*360/18:1.5cm) {$\bullet$};
\node at (12*360/18:1.5cm) {$\bullet$};
\node at (13*360/18:1.5cm) {$\bullet$};
\node at (14*360/18:1.5cm) {$\bullet$};
\node at (15*360/18:1.5cm) {$\bullet$};
\node at (16*360/18:1.5cm) {$\bullet$};
\node at (17*360/18:1.5cm) {$\bullet$};

\node at (360/18:2cm) {5};
\node at (2*360/18:2cm) {5};
\node at (3*360/18:2cm) {4};
\node at (4*360/18:2cm) {3};
\node at (5*360/18:2.0cm) {3};
\node at (6*360/18:2.0cm) {2};
\node at (7*360/18:2.0cm) {1};
\node at (8*360/18:2cm) {1};
\node at (9*360/18:2cm) {2};
\node at (10*360/18:2cm) {8};
\node at (11*360/18:2cm) {9};
\node at (12*360/18:2cm) {9};
\node at (13*360/18:2cm) {8};
\node at (14*360/18:2cm) {4};
\node at (15*360/18:2cm) {7};
\node at (16*360/18:2cm) {7};
\node at (17*360/18:2cm) {6};
\node at (0:2cm) {6};
\end{tikzpicture}};
\end{tikzpicture}
\caption{A consistent labeling $\ell$ of the ${\rm SL}_9$ web $W$ from Figure~\ref{fig:mainexample}. We write e.g. $1234789$ to indicate $\ell(e) = \{1,2,3,4,7,8,9\}$. One sees each number $1,\dots,9$ around every interior vertex and the cardinality of the set $\ell(e)$ matches ${\rm mult}_W(e)$. Each number $i = 1,\dots,9$ appears exactly twice as on a boundary edge of this labeling; joining these two copies by an arc yields the noncrossing matching $M$ from Figure~\ref{fig:intermediateobjects} (redrawn at right for convenience). Web-matching duality asserts that $\ell$ is the unique labeling of $W$ whose boundary word is a noncrossing matching.} 
\label{fig:evalexample}
\end{figure}

\begin{example}\label{eg:labeling}
Figure~\ref{fig:evalexample} gives a consistent labeling $\ell$ of the ${\rm SL}_9$ web $W$ from Figure~\ref{fig:mainexample}. The boundary word of this labeling is 
\begin{equation}\label{eq:word}
{\bf w}(\ell) = 123345566774899821 \text{ of type (9,2)}.
\end{equation}
The corresponding monomial 
$$\Delta({\bf w}(\ell)) = \Delta_{1,18}\Delta_{2,17}\Delta_{3,4}\Delta_{5,12}\Delta_{6,7} \Delta_{8,9}\Delta_{10,11} \Delta_{13,16}\Delta_{14,15} $$ 
is a noncrossing monomial for ${\rm Gr}(2,n)$. 
\end{example}

\medskip

By \cite[Equation (5.16) and Theorem 8.1]{FLL}, the unique pairing $\langle, \rangle$ \eqref{eq:pairing} can be understood in terms of the numbers $a(W,{\bf w})$: 
\begin{lemma}
For a standard ${\rm SL}_k$ web $W$ of degree~$d$ and a $(k,d)$ balanced word ${\bf w}$, one has 
$$\langle [W], \Delta({\bf w})\rangle = a(W,{\bf w}).$$
\end{lemma}

We now specialize to the case $d=2$. A noncrossing matching~$M$ of the $2k$-gon gives us a $(k,2)$ balanced word ${\bf w}(M)$ with the property that the two locations of the symbol $i$ in ${\bf w}(M)$ are exactly the two endpoints of the $i$th arc of $M$ in the $2k$-gon. The word \eqref{eq:word} is this word when $M$ is the matching from Figures~\ref{fig:intermediateobjects} and \ref{fig:evalexample}. 
We abbreviate 
$$\Delta(M) := \Delta({\bf w}(M)).$$ 
Such monomials are called {\sl noncrossing monomials}. These monomials are the canonical basis for $\mathbb{C}[{\rm Gr}(2,2k)]^{\rm std}$.

We arrive at our main combinatorial assertion. 
\begin{theorem}[Web-matching duality]\label{thm:duality}
For $T \in {\rm SYT}(2\omega_k)$, let $W = W(\mathfrak{t}_T)$ be a web diagram constructed via our tableau-to-web map. Let $M = T(M)$ be the noncrossing matching corresponding to $T$ under the Catalan bijection. 

Then $a(W,{\bf w}(M)) = 1$ and $a(W,{\bf w}(M')) = 0$ for noncrossing matchings $M' \neq M$.
\end{theorem}

That is, the web invariant  $[W(\mathfrak{t}_T)]$ is dual to the noncrossing monomial $\Delta(M)$ under the pairing \eqref{eq:pairing}. Since this is true for any triangulation $ \mathfrak{t}_T$ extending the dissection $\mathfrak{d}_T$, we get an alternative proof that the tableau-to-web-invariant map is well-defined.

\begin{proof}
By induction on~$k$ with the base cases $k=1,2$ easily checked by hand. Suppose that $\ell$ is a consistent labeling of $W$ whose boundary 
word ${\bf w}(\ell)$ is the word ${\bf w}(M')$ of a noncrossing matching~$M'$ of the $2k$-gon. We will argue that $M' = M$. (The uniqueness of the labeling $\ell$ will also follow.)

Certainly, $M'$ has at least one short arc $i,i+1$. Suppose this is the $j$th arc of $M'$ when arcs are ordered by their left endpoints. This means that $\ell(e_i) = \ell(e_{i+1}) = j$. Thus, boundary vertices $i,i+1$ are incident to different white vertices of $W$, thus $i$ and $i+1$ reside in different color sets of $W$, i.e. 
$i,i+1$ is also a short arc of $M$. 

The edges $e \colon j \in \ell(e)$ form a dimer cover $\pi$ of $W$ whose boundary subset is given by $\partial (\pi) = \{i,i+1\}$. On the other hand, $\{i,i+1\}$ is a term in the Grassmann necklace $\rmci$ for the positroid $\mathcal{M}_{C_1,\dots,C_s}$ determined by the color sets $C_1,\dots,C_s$ of the matching~$M$. Thus $\pi$ is the unique such dimer cover of $W$ (see e.g. \cite[Proposition 5.13]{MSTwist}).

Given a dimer cover $\pi$ of a web $W$, we obtain an ${\rm SL}_{k-1}$ web $W \setminus \pi$ from $W$ by decrementing the multiplicity of each edge used in $\pi$. That is, $W \setminus \pi$ is the same underlying graph but has multiplicity function $${\rm mult}_{W \setminus \pi}(e) = \begin{cases}
& {\rm mult}_{W}(e)-1 \text{ if $e \in \pi$} \\
& {\rm mult}_{W}(e) \text{ if $e \not\in \pi$.}
\end{cases} $$
If ${\rm mult}_W(e) = 1$ and $e \in \pi$, one can interpret this operation as deleting the edge~$e$ from~$W$. In the case of the dimer cover $\pi$ from the previous paragraph, this means that the boundary edges incident to vertices $i,i+1$ are deleted, and $W \setminus \pi$ is naturally a degree two standard web on the ground set $[2k] \setminus \{i,i+1\}$. 

It follows that  
\begin{equation}\label{eq:first}
a(W,M') = a(W \setminus \pi,M' \setminus \{i,i+1\})
\end{equation}
where the notation $M' \setminus \{i,i+1\}$ stands for the noncrossing matching of the $2(k-1)$-gon on the ground set $[2k] \setminus \{i,i+1\}$ obtained by removing the arc $i,i+1$ from $M'$. In particular, $a(W \setminus \pi,M' \setminus \{i,i+1\}) \neq 0$.

To complete the proof, it would suffice to show that the ${\rm SL}_{k-1}$ web $W \setminus \pi$ is an image of the tableau-to-web map for the matching $M \setminus \{i,i+1\}$, since by induction we would be able to conclude that $M' \setminus \{i,i+1\} = M \setminus \{i,i+1\}$. As we will see, in the typical case, $W \setminus \pi$ is exactly the image of the tableau corresponding to the matching $M \ {i, i+1}$ under the tableau-to-web map. This fails in certain degenerate cases, but we will show that in these cases, $W \setminus \pi$ is equivalent to such an image.

Let $\mathfrak{d}$, $\mathfrak{t}$ be the dissection and chosen triangulation corresponding to $T$ and $W$. We analyze what happens when passing from $M$ to 
$M \setminus \{i,i+1\}$. 

The numbers $i,i+1$ are in adjacent color sets of $M$, hence determine a side $S$ of the dissection $\mathfrak{d}$ and the triangulation $\mathfrak{t}$. If the weight of this side is at least two in $\mathfrak{t}$, 
then we decrement the weight of $S$ by one in when passing from $M$ to $M \setminus \{i,i+1\}$. The two white vertices of $W$ corresponding to the side $S$ have one smaller degree, hence each is joined to one fewer boundary vertex (indeed the boundary edges incident to $i,i+1$ are deleted). We can choose the same triangulation $\mathfrak{t}$ (decrementing the weight of the corresponding side by one). Thus the underlying plabic graph does not change. We decrement the multiplicity of exactly one edge around each trivalent vertex (namely, we decrement the edge whose corresponding region contains the side~$S$). Altogether, in passing from $M$ to $M \setminus M \setminus \{i,i+1\}$ we delete the edges of a dimer cover of $W$ with boundary $i,i+1$. By the uniqueness, we must have deleted the edges in $\pi$, so that $W \setminus \pi$ indeed is an instance of the tableau-to-web map in this case. 

To complete the proof, we need to analyze what happens in passing from 
$M$ to $M \setminus \{i,i+1\}$ in the special case that the side $S$ has weight one in $\mathfrak{d}$. The color sets of $M \setminus \{i,i+1\}$ are obtained by deleting $i,i+1$ from the ground set and merging the two color sets containing these numbers. At the level of weighted dissections, we contract the side $S$ to a point and replace parallel weighted edges by a single edge while adding their weights. We can perform the same contraction operation to the weighted triangulation $\mathfrak{t}$ extending $\mathfrak{d}$. We delete exactly one triangle during this process, namely, the triangle containing the side $S$. At the level of webs, we delete a trivalent black vertex and merge the two white vertices which correspond to the deleted side $S$ while deleting their boundary legs at $i,i+1$. Let $W'$ be the web which results from these steps.

The webs $W\setminus \pi$ and $W'$ are related to each other by the following simple operation. Let $t$ be the triangle containing the side~$S$ in $\mathfrak{t}$. The corresponding trivalent black vertex $b(t) \in W$ has three edges, one of which is dual to the side $S$ and has multiplicity one. Any dimer cover of $W$ with boundary $i,i+1$ must use this edge. Thus, we delete this edge in the web $W \setminus \pi$ so that $b(t)$ becomes a bivalent vertex. It is not hard to see that $W'$ is obtained from $W \setminus \pi$ by contracting this bivalent vertex. Contracting bivalent vertices does not effect the number of consistent labelings so that $a(W',M' \setminus \{i,i+1\}) = a(W \setminus \pi ,M' \setminus \{i,i+1\}) \neq 0$. 
By the induction hypothesis applied to $W'$ we conclude that $M'\setminus \{i,i+1\} = M\setminus \{i,i+1\}$ completing the proof. 
\end{proof}

\section{Web invariants }\label{secn:invariants}
We briefly review the definition of web invariant here. 

We identify the coordinate ring $\mathbb{C}[{\rm Gr}(k,n)]$ with the algebra of ${\rm SL}_k$-invariant polynomial functions of $n$-tuples of vectors in $\mathbb{C}^k$. For example, the Pl\"ucker coordinate $\Delta_I$ is the ${\rm SL}_k$-invariant polynomial 
$$(v_1,\dots,v_n) \mapsto {\rm det}(v_{i_1},\dots,v_{i_k})$$
where $i_1,\dots,i_k$ are the elements of $I$ written in ascending order and $(v_1,\dots,v_n)$ is an $n$-tuple of vectors in $\mathbb{C}^k$.

As a special case, we have the identification 
$$\mathbb{C}[{\rm Gr}(k,dk)]^{\rm std} =  {\rm Hom}_{{\rm SL}_k}(\otimes_{i=1}^{dk} \mathbb{C}^k,\mathbb{C}).$$

By multilinearity, an element of the above hom-space is determined by 
how it evaluates on tensor products 
$e_{i_1} \otimes \cdots \otimes e_{i_{dk}} \in \otimes^{dk}\mathbb{C}^k$ where each $e_{i_j}$ is a standard basis vector for $\mathbb{C}^k$. 


\begin{definition}[Web invariant]\label{defn:bracketofW}
Let $W$ be a standard ${\rm SL}_k$ web diagram of degree $d$. Then the web invariant $[W] \in \mathbb{C}[{\rm Gr}(k,dk)]_{(d)}$ is the polynomial function defined by its evaluation on tensor products of basis vectors as follows:
\begin{equation}\label{eq:brakcetofW}
[W](e_{i_1}\otimes \cdots \otimes e_{i_{dk}}) = {\rm sign}({\bf w})a(W,{\bf w}) 
\end{equation}
for all words $w = i_1,\dots,i_{dk}$ drawn from $[k]$.
\end{definition}

\begin{remark}
We have only defined ${\rm sign}({\bf w})$ when $w$ is a balanced word, but this is a necessary condition for $a(W,{\bf w})$ to be nonzero, so the above definition parses.\end{remark}

It follows from \cite[Lemma 5.4]{FLL} that the function defined in this way indeed determines an element of $\mathbb{C}[{\rm Gr}(k,dk)]^{\rm std}$. 

When $W$ is the claw graph on vertices $1,\dots,k$ (see Example~\ref{eg:clawgraph}), one has $[W] = \Delta_{1,\dots,k}$ and \eqref{eq:brakcetofW} encodes the Liebnitz formula for the determinant. 

\medskip

{\bf Warning:} Definition~\ref{defn:bracketofW} is a slick way of defining $[W]$ for all standard ${\rm SL}_k$ webs $W$, but it sometimes is the ``wrong'' definition up to a sign. For example, this definition can clash with  
the more familiar way of turning an  
${\rm SL}_2$ or ${\rm SL}_3$ web into a web invariant up to a multiplicative factor of $-1$. Thus, we refer to both $[W]$ and $-[W]$ as {\sl web invariants} when stating our two main theorems. Our next lemma identifies this sign discrepancy when $k=2$ and allows us to identify the ``correct'' sign for the webs $W(\mathfrak{t}_T)$ in Section~\ref{secn:sszn}.

A noncrossing matching $M$ of the $2k$-gon naturally becomes a standard ${\rm SL}_2$  web~$W_M$ by replacing each arc $ij \in M$ by a bivalent interior white vertex joined to vertices $i$ and $j$ by an edge of multiplicity one. The reader should not confuse the ${\rm SL}_2$ web $W_M$ with the ${\rm SL}_k$ webs $W(\mathfrak{t}_T)$ which we have introduced in Section~\ref{secn:construction}.

Our next result compares the web invariant $[W_m]$ to the noncrossing monomial $\Delta(M)$.

\begin{lemma}\label{lem:signs}
Let $T \in {\rm SYT}(2\omega_k)$ with entries $i_1 ,i_2 ,\dots,i_k$ in the first column. Let $M = M(T)$. 
Then 
$$[W_{M}] = \prod_{j\in [k]}(-1)^{i_j-j} \Delta(M) \in \mathbb{C}[{\rm Gr}(2,2k)].$$
\end{lemma}
\begin{proof}
We get a balanced word ${\bf w}(T)$ of type $(2,k)$ by putting 1's positions $i_1,\dots,i_k$ and putting $2$'s in positions $[2k] \setminus \{i_1,\dots,i_k\}$. Thus every left endpoint of $M$ is marked with a $1$ and every right endpoint is marked with a $2$. It follows that $\Delta(M)(e({\bf w}(T)) = +1$. 

On the other hand, $[W_{M(T)}](e({\bf w}(T)) = {\rm sign}({\bf w}(T))$. The result follows by checking that 
$${\rm sign}({\bf w}(T)) = \prod_{j\in [k]}(-1)^{i_j-j}$$
which is easy to see. 
\end{proof}


\section{Web immanants}\label{secn:immanants}
We briefly review ${\rm SL}_2$ web immanants introduced by T.~Lam \cite{LamDimers} in the spirit of \cite{RhoadesSkandera}.

A {\sl plabic network} is a choice of reduced plabic graph $G$ of type (k,n) together with an edge weighting ${\rm wt}_N \colon E(G) \to \mathbb{C}^*$. For a dimer cover $\pi$ of $G$ let ${\rm wt}_N(\pi):= \prod_{e \in \pi}{\rm wt}_N(e)$ be the product of the edge weights used in $\pi$. For $I \in \binom{[n]}k$, one has the complex number 
\begin{equation}\label{eq:DeltaI}
\Delta_I(N) := \sum_{\pi \colon \, \partial(\pi) = I}{\rm wt}_N(\pi),
\end{equation}
a weight-generating function for dimer covers $\pi$ of $N$ using boundary vertices~$I$.

For any plabic network $N$, the numbers $\Delta_I(N) \colon I \in \binom{[N]}k$ satisfy the Pl\"ucker relations, i.e. they determine a point in the Grassmannian 
${\rm Gr}(k,n)$. That is, for any $N$, one can find an $n$-tuple $(v_1,\dots,v_n)$ of vectors in $\mathbb{C}^k$ whose Pl\"ucker coordinates match the $\Delta_I(N)$'s:
\begin{equation}\label{eq:Ntovis}
{\rm det}(v_{i_1},\dots,v_{i_k}) = \Delta_I(N) \text{ for all $I \in \binom{[n]}k$}
\end{equation}
where again $i_1,\dots,i_k$ are the elements of $I$ in ascending order. It is known that the set of points in ${\rm Gr}(k,n)$ which arise from a network $N$ in this way is Zariski-dense.

The previous two paragraphs construct the canonical basis for $\mathbb{C}[{\rm Gr}(k,n)]_{(1)}$ using networks. The ${\rm SL}_2$ web immanants do the same for $\mathbb{C}[{\rm Gr}(k,n)]_{(2)}$.

The next definition is the ``semistandard analogue'' of a noncrossing matching.
\begin{definition}
A {\sl partial noncrossing matching} of type $(k,n)$ is a pair $(M,P)$ where $M$ is a noncrossing matching of some subset $S \subset [n]$ and $P \subset [n] \setminus S$ is a subset satisfying $\#S+2\#P = 2k$. The {\sl content} of a partial noncrossing matching $(M,P)$ of type $(k,n)$ is the sequence $(d_1,\dots,d_n)$ defined by $d_i = 1$ when $i \in S$, $d_i = 2$ when $i \in P$, and $d_i=0$ otherwise. \end{definition}
We think of elements of $P$ as the result of merging two vertices in a short arc to obtain a boundary vertex ``matched with itself.''

Given a plabic graph network $N$ of type $(k,n)$, a $2$-{\sl like subgraph} $W$ of $G$ is an ${\rm SL}_2$ web whose underlying plabic graph is a subgraph of $N$. Thus, $W$ is a disjoint union of edges of multiplicity two (henceforth often referred to as {\sl doubled edges}), cycles of even length formed by multiplicity-one edges, and multiplicity-one paths joining boundary vertices. Such $W$ naturally begets a type $(k,n)$ partial noncrossing matching $(M(W),P(W))$ where $M$ is the noncrossing matching of boundary vertices encoded by the paths of $W$ and $P$ is the subset of boundary vertices whose boundary edge is doubled in $W$. The content of $W$ matches the content of this partial noncrossing matching.

Note that because the definition of plabic graph requires  every boundary vertex to have degree at most one, hence any 2-like subgraph is a type 2 web in the sense of Section~\ref{secn:construction}.

Let $c(W)$ denote the number of cycles in~$W$. One defines 
\begin{equation}\label{eq:wt_sub_W_of_N}
{\rm wt}_N(W) = 2^{c(W)}\prod_{e \in E(N)}{\rm wt}_N(e)^{{\rm mult}_W(e)},
\end{equation}
a certain degree two analogue of the numbers ${\rm wt}_N(\pi)$.

By definition, the {\sl web immanant} $F_{(M,P)}$ indexed by a partial noncrossing matching $(M,P)$ of type $(k,n)$ is the element of $\mathbb{C}[{\rm Gr}(k,n)]_{(2)}$ which 
evaluates on vectors $(v_1,\dots,v_n)$ as follows. Choose a plabic network~$N$ of type $(k,n)$ whose Pl\"ucker coordinates match those of $(v_1,\dots,v_n)$ as in \eqref{eq:Ntovis}. Then 
$$F_{(M,P)}(v_1,\dots,v_n) = \sum_{M(W) = M, P(W)  = P}{\rm wt}_N(W),$$
where on the left hand side we evaluate the web immanant on $(v_1,\dots,v_n)$ and on the right hand side we sum over $2$-like subgraphs $W$ of~$N$ whose connectivity is given by $(M,P)$. By the aforementioned Zariski-denseness, this recipe indeed specifies $F_{(M,P)}$ as an element of the Grassmannian coordinate ring. 

By \cite[Theorems 2.1, 5.5, and 5.13]{LamDemazure}, the elements $F_{(M,P)}$
form a basis for $\mathbb{C}[{\rm Gr}(k,n)]_{(2)}$. This basis coincides with Lusztig's canonical basis for $L_n(2\omega_k)$ and inherits many good properties. The basis is cyclically invariant and has good restriction properties to positroid subvarieties of ${\rm Gr}(k,n)$. Moreover, each basis element is nonnegative when evaluated on points 
in ${\rm Gr}(k,n) \cap \mathbb{RP}^{\binom n k -1}_{\geq 0}$.

\section{Semistandardization}\label{secn:sszn}
The constructions in Sections~\ref{secn:construction}, \ref{secn:duality}, and \ref{secn:invariants} were formulated for standard tableaux. We now extend the tableau-to-web map to the setting of semistandard tableaux and prove Theorems~\ref{thm:main1} and~\ref{thm:main2}. 

The following encoding step appears often in this section. Let $(d_1,\dots,d_n)$ be a sequence drawn from $\{0,1,2\}$ and with $\sum_{i}d_i = 2k$. Then there is a unique weakly increasing sequence $a_1,\dots,a_{2k}$ in which 
each symbol $i=1,\dots,n$ appears exactly $d_i$ many times. We say that $(d_1,\dots,d_n)$ {\sl encodes to} the sequence $a_1,\dots,a_{2k}$.

\begin{definition}[Standardization of $T$]
The standardization of $T \in {\rm SSYT}(2\omega_k,[n])$ is the standard tableau $\hat{T} \in {\rm SYT}(2\omega_k)$ defined as follows. Let $(d_1,\dots,d_n)$ be the content of $T$ encoded as the sequence $a_1,\dots,a_{2k}$. One has a sequence of tableaux $\emptyset =:T_0  \subset \cdots \subset T_{2k}:= T$ via 
the following requirements: 
\begin{itemize}
\item each $T_i$ is obtained from its predecessor $T_{i-1}$ by adding a single box filled with the number $a_i$
\item if $a_i = a_{i+1}$, then the box $T_i \setminus T_{i-1}$ is in the first column.
\end{itemize}
The tableau $\hat{T}$ is defined by instead filling the box $T_i \setminus T_{i-1}$ with the number $i$ rather than the number~$a_i$.
\end{definition}

\begin{remark}
Recall that the first column of $T \in {\rm SYT}(2\omega_k)$ encodes left endpoints of arcs in the matching $M(T)$, i.e. the endpoint of the arc which is smaller in the order $1<2<\cdots<2k$. Standardization says that when $T$ has two copies of the same number, we should regard the entry that appears in the first column as ``smaller.'' 
\end{remark}

\begin{definition}[Semistandard tableau-to-web map]\label{defn::ssTtoW}
Let $T \in {\rm SSYT}(2\omega_k,[n])$ with content encoded by the sequence $a_1,\dots,a_{2k}$. Let 
$\hat{W}$ be a tableau-to-web image of the standardization $\hat{T}$. Thus $\hat{W}$ has boundary edges $e_1,\dots,e_{2k}$ and is drawn in the $2k$-gon. We construct a web $W$ in the $n$-gon which has the same interior vertices and edges as $\hat{W}$ and the same edge multiplicities, but whose edge $e_i$ is attached instead to the boundary vertex $a_i$ in the $n$-gon. By definition, such a web $W$ is a tableau-to-web image of $T$.
\end{definition}

Note that the content of the web diagram~$W$ constructed in this way matches the content of the semistandard tableau~$T$.

\begin{example}\label{eg:sszneg}
The semistandard tableau $T$ in Figure~\ref{fig:sszweb} 
standardizes to the standard tableau in Figure~\ref{fig:mainexample} that has served as our running example. It has content sequence
$$(d_1,\dots,d_{18})  =(1,1,2,1,1,0,1,1,2,1,1,1,1,0,2,1,1,1,0,1).$$ 
This sequence encodes to the sequence
\begin{equation}\label{eq:reencodes}
a_1,\dots,a_{18} = 1,2,3,3,4,5,7,8,8,9,10,11,12,14,14,15,16,18.
\end{equation}
To obtain a tableau-to-web image of~$T$, we reattach the 18 boundary legs of the web in Figure~\ref{fig:mainexample} according to this sequence as we have done in Figure~\ref{fig:sszweb}.
\end{example}

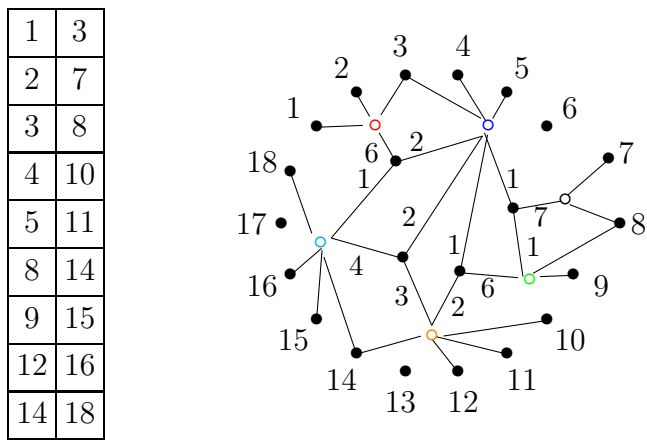
\begin{figure}\centering
\begin{tikzpicture}
\node at (-5,0) {\begin{ytableau}
1 & 3  \\
2 & 7  \\
3 & 8  \\
4 & 10  \\
5 & 11 \\
8 & 14  \\
9 & 15  \\
12 & 16  \\
14 & 18  \\
\end{ytableau}};
\draw (0:2.5cm)--(.4*360/18:1.85cm);
\draw (1*360/18:2.5cm)--(.6*360/18:1.85cm);
\node at (.5*360/18:1.8cm) {$\circ$};
\node at (0:2.5cm) {$\bullet$};
\node at (360/18:2.5cm) {$\bullet$};

\draw (17*360/18:2cm)--(4+16.5*360/18:1.6cm);
\draw (18*360/18:2.5cm)--(4+16.5*360/18:1.5cm);
\node at (16.5*360/18:1.5cm) {\textcolor{green}{$\circ$}};
\node at (16*360/18:2cm) {$\bullet$};
\node at (17*360/18:2cm) {$\bullet$};

\draw (3*360/18:2cm)--(3*360/18:1.62cm);
\draw (5*360/18:2cm)--(5+3*360/18:1.55cm);
\draw (4*360/18:2cm)--(2.5+3*360/18:1.55cm);
\node at (3*360/18:1.5cm) {\textcolor{blue}{$\circ$}};
\node at (2*360/18:2cm) {$\bullet$};
\node at (3*360/18:2cm) {$\bullet$};
\node at (4*360/18:2cm) {$\bullet$};

\draw (5*360/18:2cm)--(-5+6*360/18:1.6cm);
\draw (7*360/18:2cm)--(5+6*360/18:1.6cm);
\draw (6*360/18:2cm)--(6*360/18:1.65cm);
\node at (6*360/18:1.5cm) {\textcolor{red}{$\circ$}};
\node at (5*360/18:2cm) {$\bullet$};
\node at (6*360/18:2cm) {$\bullet$};
\node at (7*360/18:2cm) {$\bullet$};

\draw (8*360/18:2cm)--(-7.5+9.6*360/18:1.6cm);
\draw (10*360/18:2cm)--(2.5+9.5*360/18:1.5cm);
\draw (11*360/18:2cm)--(3.5+9.5*360/18:1.5cm);
\draw (12*360/18:2cm)--(5+9.6*360/18:1.5cm);
\node at (9.5*360/18:1.5cm) {\textcolor{cyan}{$\circ$}};
\node at (8*360/18:2cm) {$\bullet$};
\node at (9*360/18:2cm) {$\bullet$};
\node at (10*360/18:2cm) {$\bullet$};
\node at (11*360/18:2cm) {$\bullet$};

\draw (12*360/18:2cm)--(-7.5+13.6*360/18:1.5cm);
\draw (14*360/18:2cm)--(13.5*360/18:1.53cm);
\draw (15*360/18:2cm)--(2+13.5*360/18:1.5cm);
\draw (16*360/18:2cm)--(4+13.6*360/18:1.5cm);
\node at (13.5*360/18:1.5cm) {\textcolor{orange}{$\circ$}};
\node at (12*360/18:2cm) {$\bullet$};
\node at (13*360/18:2cm) {$\bullet$};
\node at (14*360/18:2cm) {$\bullet$};
\node at (15*360/18:2cm) {$\bullet$};

\node at (-.8,.95) {\small 6};
\node at (-.2,1.1) {\small 2};
\node at (-.3,.1) {\small 2};
\node at (.3,-.28) {\small 1};
\node at (1.08,.6) {\small 1};
\node at (1.35,-.3) {\small 1};
\node at (-1.0,-.55) {\small 4};
\node at (-.9,.60) {\small 1};
\node at (-.4,-.95) {\small 3};
\node at (.35,-1.1) {\small 2};
\node at (.75,-.85) {\small 6};
\node at (1.45,.1) {\small 7};

\node at (6*360/18:.95cm) {$\bullet$};
\draw (6*360/18:1.4cm)--(6*360/18:.95cm);
\draw (-2+9.5*360/18:1.35cm)--(6*360/18:.95cm);
\draw (3*360/18:1.35cm)--(6*360/18:.95cm);

\node at (11.5*360/18:.6cm) {$\bullet$};
\draw (-2+9.5*360/18:1.35cm)--(11.5*360/18:.6cm);
\draw (3*360/18:1.35cm)--(11.5*360/18:.6cm);
\draw (13.5*360/18:1.35cm)--(11.5*360/18:.6cm);

\node at (15*360/18:.75cm) {$\bullet$};
\draw (-2+3*360/18:1.4cm)--(15*360/18:.75cm);
\draw (13.5*360/18:1.35cm)--(15*360/18:.75cm);
\draw (-2+16.5*360/18:1.35cm)--(15*360/18:.75cm);

\node at (.5*360/18:1.1cm) {$\bullet$};
\draw (3*360/18:1.4cm)--(.5*360/18:1.1cm);
\draw (.5*360/18:1.75cm)--(.5*360/18:1.1cm);
\draw (16.5*360/18:1.4cm)--(.5*360/18:1.1cm);

\node at (360/18:2.75cm) {7};
\node at (2*360/18:2.4cm) {6};
\node at (3*360/18:2.4cm) {5};
\node at (4*360/18:2.4cm) {4};
\node at (5*360/18:2.4cm) {3};
\node at (6*360/18:2.4cm) {2};
\node at (7*360/18:2.4cm) {1};
\node at (8*360/18:2.4cm) {18};
\node at (9*360/18:2.4cm) {17};
\node at (10*360/18:2.4cm) {16};
\node at (11*360/18:2.4cm) {15};
\node at (12*360/18:2.4cm) {14};
\node at (13*360/18:2.4cm) {13};
\node at (14*360/18:2.4cm) {12};
\node at (15*360/18:2.4cm) {11};
\node at (16*360/18:2.4cm) {10};
\node at (17*360/18:2.4cm) {9};
\node at (0:2.75cm) {8};
\end{tikzpicture}
\caption{To obtain a tableau-to-web image of the semistandard tableau~$T$ at left, we standardize $T$ to obtain the tableau from Figure~\ref{fig:mainexample} and then reattach the legs of the web from Figure~\ref{fig:mainexample} according to the sequence \eqref{eq:reencodes}.
}\label{fig:sszweb}
\end{figure}
The invariant $[W]$ of a nonstandard web {\sl is defined} in the following way: 
\begin{equation}\label{eq:stitchW}
[W](v_1,\dots,v_n) = [\hat{W}](v_{a_1},\dots,v_{a_{2k}})
\end{equation}
where $a_1,\dots,a_{2k}$ is the sequence which encodes the content of $W$. Thus, we evaluate $[W]$ on $(v_1,\dots,v_n)$ by ``forgetting'' the vectors $v_i$ for which $d_i=0$ and using the vectors $v_i$ with $d_i=2$ ``twice.''

Next we turn to standardization of partial noncrossing matchings. Let $(M,P)$ be a partial noncrossing matching of type $(k,n)$ whose content is encoded by $a_1,\dots,a_{2k}$. We obtain a noncrossing matching $\widehat{(M,P)}$ uniquely by joining $i$ and $j$ in the $2k$-gon if either $a_i = a_j$ or $a_i \neq a_j$ and $a_i$ is joined to $a_j$ in $M$.

Note that the condition $a_i=a_j$ and $i<j$ only happens when in fact $j=i+1$. 

\begin{remark}\label{rmk:pcms}
To a pair $(M,P)$ as above, we associate a semistandard tableau $$T(M,P) \in {\rm SSYT}(2\omega_k,[n])$$ by first computing the tableau $\hat{T} := T(\widehat{(M,P)}) \in {\rm SYT}(2\omega_k)$ using the Catalan bijection, and then replacing the symbol $i \in \hat{T}$ with the symbol $a_i \in T$. It is simple to check that the map $(M,P) \mapsto T(M,P)$ is bijective. Thus, ${\rm SL}_2$ web immanants can be indexed by semistandard tableaux rather than by partial noncrossing matchings. 
\end{remark}

\begin{example}
The tableau $T$ from Example~\ref{eg:sszneg} corresponds to the partial noncrossing matching $(M,P)$ of type $(9,18)$ defined as follows. The set~$S$ from the definition of partial noncrossing matching is equal to $[18] \setminus \{3, 6,8,13, 14,17\}$.
The matching $M$ on the ground set $S$ consists of the pairs 
$\{1,18\}$, 
$\{2,16\}$, 
$\{4,11\}$, 
$\{5,7\}$, 
$\{9,10\}$, and 
$\{12,15\}$. The set~$P$ is $\{3,8,14\}$. Note for example that the content of $(M,P)$ agrees with the content of the tableau $T$.
\end{example}

\begin{lemma}\label{lem:stitchlemma}
For a partial noncrossing matching $(M,P)$ as above, one has 
\begin{equation}\label{eq:stitchFM}
F_{(M,P)}(v_1,\dots,v_n) = F_{\widehat{(M,P)}}(v_{a_1},\dots,v_{a_{2k}}).
\end{equation}
\end{lemma}

Here we have indexed the immanant on the left hand side by a partial noncrossing matching as in Remark~\ref{rmk:pcms}. 

The immanant on the left hand side of this equality is a function on ${\rm Gr}(k,n)$ (evaluated on the $n$-tuple of vectors $v_1,\dots,v_n \in \mathbb{C}^k$) while that on the right hand side is a function on ${\rm Gr}(k,2k)$. 

\begin{proof}
Given a $(k,n)$ plabic network~$N$,  and a content vector ${\bf d} = (d_1,\dots,d_n)$ drawn from $\{0,1,2\}$ and with $\sum d_i=2k$, we will define a network ${\rm split}_{\bf d}(N)$ of type $(k,2k)$ by the following steps. First, let $(a_1,\dots,a_{2k})$ be the weakly increasing sequence encoding ${\bf d}$. Second, recall that the definition of plabic graph mandates that each boundary vertex of $N$ has degree at most one. If boundary vertex $i$ has degree one, we denote by $v_i$ its neighboring interior vertex. Then ${\rm split}_{\bf d}(N)$ is obtained from $N$ by
\begin{itemize}
\item deleting all boundary edges $iv_i$ (if they are present),
\item adding boundary edges $iv_{a_i}$ for $i=1,\dots,2k$ whose weight matches the weight of the edge $iv_i$
\end{itemize}
That is, the $i$th boundary edge of $N$ gives rise to one or two boundary edges of ${\rm split}_{\bf d}(N)$ according to whether $d_i$ equals one or two. This recipe does not affect any interior vertices or edges. In particular, it does not change the quantity \eqref{eq:picount}, so that ${\rm split}_{\bf d}(N)$ indeed has type $(k,2k)$.

By a direct argument using matchings, it is straightforward to see that if $N$ has the same  Pl\"ucker coordinates as an $n$-tuple of vectors $(v_1,\dots,v_n)$, then ${\rm split}_{\bf d}$ has the same Pl\"ucker coordinates as the sequence $(v_{a_1},\dots,v_{2k})$. Thus, we can prove the desired equality \eqref{eq:stitchFM} by proving the equality of weight-generating functions for 2-weblike subgraphs of $N$ and ${\rm split}_{\bf d}(N)$.

 Next, let $W$ be a 2-like subgraph of $N$. Recall that $W$ is a type 2 web, and let ${\bf d} = (d_1,\dots,d_n)$ be its multiweight vector. Then ${\bf d}$ is a sequence as in the first paragraph. For any $i$ with $d_i=2$, there are consecutive indices $j,j+1$ such that $a_j=a_{j+1}=i$. We obtain a 2-like subgraph ${\rm split}_{\bf d}(W)$ of the network ${\rm split}_{\bf d}(N)$ by replacing each doubled boundary edge $iv_i$ of $W$ by a pair of multiplicity-one edges $jv_{i}$ and $(j+1)v_i$, setting the weights of these new edges equal to ${\rm wt}_W(iv_i)$. That is, we ``split up`` the doubled boundary edge in $W$ to two multiplicity-one edges in ${\rm split}_{\bf d}(W)$, joining the interior vertex $v_i$ to adjacent boundary vertices. (The rest of $W$ is left intact.) Clearly one has 
 $${\rm wt}_N(W) = {\rm wt}_{{\rm split}_{\bf d}(N)}({\rm split}_{\bf d}(W))$$ with ${\rm wt}$ as defined in \eqref{eq:wt_sub_W_of_N}.

Observe that whenever $a_j = a_{j+1} = i$ as in the previous paragraph, the boundary vertices $j,j+1$ are connected (in the matching-connectivity sense) by a multiplicity-one path in the 2-like subgraph ${\rm split}_{\bf d}(W)$. Observe also that any 2-like subgraph of ${\rm split}_{\bf d}(W)$ which connects these two boundary vertices must use the boundary edges $jv_i$ and $(j+1)v_i$.

Using the first observation in the previous paragraph, one can see that if $W$ has connectivity given by the partial noncrossing matching $(M,P)$, then ${\rm split}_{\bf d}(W)$ has connectivity given by the noncrossing matching $\widehat{(M,P)}$. Indeed, recall that the latter matching is obtained from $(M,P)$ by ``splitting'' each element of $P$ (a boundary vertex of the $n$-gon ``paired with itself'') into a short arc in the $2k$-gon. This is exactly what the passage $W \mapsto {\rm split}_{\bf d}(W)$ accomplishes at the level of connectivity. 

We have so far explained that every $2$-like subgraph $W$ of $N$ with connectivity $(M,P)$ begets a $2$-like subgraph ${\rm split}_{\bf}(W)$ of ${\rm split}_{\bf d}(N)$ with connectivity $\widehat{(M,P)}$ and with the same weight. Moreover, every 2-like subgraph of ${\rm split}_{\bf d}(N)$ with the correct connectivity arises in this way, using the second observation from two paragraphs previous. This establishes the desired equality of generating functions, completing the proof.
\end{proof}

We conclude by proving Theorems~\ref{thm:main1} and \ref{thm:main2}.
\begin{proof}
We will first show that every dual canonical basis element corresponding to a standard tableau is a web invariant and then deduce the statement for semistandard tableaux using Lemma~\ref{lem:stitchlemma}.

Consider a standard tableau $T \in {\rm SYT}(2\omega_k)$ and let $i_1,\dots,i_k$ be the entries in its first column. By Theorem~\ref{thm:duality}, the web invariant $[W(\mathfrak{t}_T)]$ is dual to the noncrossing monomial $\Delta(M) \in \mathbb{C}[\rm Gr(2,2k)]^{\rm std}$. On the other hand, by \cite[Equation (5.17)]{FLL}, the web immanant $F_{M}$ is dual to $[W_M]$ which equals $\prod_{j}(-1)^{i_j-1}\Delta(M)$ using Lemma~\ref{lem:signs}. (This is because ${\rm Web}_2(N)$ is a generating function for the $[W_M]$'s rather than for the $\Delta(M)$'s, see \cite[Definition 4.2 and equation (5.6)]{FLL}.) 

By comparing these, it follows that $\prod_{j}(-1)^{i_j-1}[W(\mathfrak{t}_T)]$ is a web immanant, i.e. a canonical basis element by \cite[Theorem 2.1 (2)]{LamDemazure}. This establishes Theorem~\ref{thm:main1} and equivalently \ref{thm:main2} for standard tableaux.

The web invariant for a semistandard tableau is defined in terms of those for standard tableaux by the evaluation equation \eqref{eq:stitchW}. We have checked that the same relationship holds between standard web immanants and semistandard ones in Lemma~\ref{lem:stitchlemma}. Thus each web invariant for semistandard tableaux is also a web immanant, which establishes \ref{thm:main2} and thus \ref{thm:main1}, completing the proof.

\end{proof}



\section*{Acknowledgements}

This project began during at the 2014 Cluster Algebras AMS MRC where we collaborated with Darlayne Addabbo, Eric Bucher, Sam Clearman, Laura Escobar, Ian Le, Ningning Ma, Suho Oh, and Hannah Vogel on a problem proposed by Thomas Lam and David Speyer. In particular, we had already conjectured Theorem~\ref{thm:main2} and recognized the importance of Definition~\ref{defn:intervals} at that time, see \cite[Appendix]{FLL}. We thank all of the above people for their contribution and thank especially Ian Le for many conversations which influenced our approach to this topic. We also thank the anonymous referee for helpful suggestions for improving the exposition and correctness.


\end{document}